\renewcommand{\epsilon}{\varepsilon}
\newtheorem{lem}{Lemma}
\newtheorem{prop}[lem]{Proposition}
\newtheorem{thm}[lem]{Theorem}
\newtheorem{cor}[lem]{Corollary}
\newtheorem*{thm1}{Theorem A}
\newtheorem*{thm2}{Theorem B}
\theoremstyle{definition}
\newtheorem{defn}[lem]{Definition}
\theoremstyle{remark}
\newtheorem{rem}[lem]{Remark}
\newtheorem{example}[lem]{Example}
\begin{document}
\title{Filtrations and test-configurations}
\author{G\'abor Sz\'ekelyhidi \\ \\ { \it with an appendix by Sebastien Boucksom}}

\date{}

\maketitle

\begin{abstract}
	We introduce a strengthening of K-stability, based on
	filtrations of the homogeneous coordinate ring. This allows for
	considering certain limits of families of test-configurations,
	which arise naturally in several settings. We
        prove that if a manifold with no
	automorphisms admits a cscK metric, then it satisfies this
	stronger stability notion. We also discuss the relation with
	the birational transformations in the definition of
	$b$-stability. 
\end{abstract}
      
\section{Introduction}
Given a compact complex manifold $X$ with an ample line bundle $L$, 
the notion of a test-configuration is central to the definition of
K-stability, which in turn is conjecturally related to the existence of
a constant scalar curvature K\"ahler metric in the first Chern class
$c_1(L)$, by the Yau-Tian-Donaldson conjecture~\cite{Yau93,Tian97,Don02}.  
Roughly speaking, test-configurations for $(X,L)$ are 
$\mathbf{C}^*$-equivariant flat degenerations of $X$ into possibly
singular schemes. 
It was shown by Witt Nystr\"om~\cite{Ny10} that test-configurations for
$(X,L)$ give rise to 
filtrations of the homogeneous coordinate ring and in this
paper we explore
the converse direction of this. The first observation is
that every suitable filtration
gives rise to a family of test-configurations living in larger and
larger projective spaces, and that the filtration should in some sense
be thought of as the limit of this family. See Section~\ref{sec:filt}
for the detailed definitions.

It is natural to extend the
class of test-configurations to these limiting objects for several
reasons. 
For instance every convex
function on the moment polytope of a toric variety can be thought of as
a filtration, but only the rational piecewise linear convex
functions give rise
to test-configurations by Donaldson's work~\cite{Don02}. Another reason
is that Apostolov-Calderbank-Gauduchon-T\o{}nnesen-Friedman~\cite{ACGT3}
have found an example of a manifold that does not admit an extremal
metric, but does not appear to be destabilized by a test-configuration.
Rather it is destabilized by a $\mathbf{C}^*$-equivariant degeneration
which is equipped with an irrational polarization, and this 
can be thought of as a filtration. Finally in~\cite{GSz07_1} we studied
minizing sequences for the Calabi functional on a ruled surface, and
found that the limiting behavior of the metrics has an algebro-geometric
counterpart, as a sequence of test-configurations. In general there is
no limiting test-configuration, since in the sequence we need embeddings
into larger and larger projective spaces, but once again we can think of the
limit as a filtration. We will describe these examples in more detail 
in Section~\ref{sec:toric}. Note that Ross and
Witt Nystr\"om~\cite{RW11} have done related work in a more analytic
direction. Starting with a suitable filtration, they define an
``analytic test-configuration'', which is a geodesic ray in the space of
metrics in a weak sense. For more in this direction see for example
Phong-Sturm~\cite{PS06}. 

We define a notion of Futaki
invariant for filtrations, extending the usual definition. Our main result, in 
Section~\ref{sec:stoppa} is the following.
\begin{thm1}
	Suppose that $X$ admits a cscK metric in
	$c_1(L)$, and the automorphism group of $(X,L)$ is finite. Then
	if $\chi$ is a filtration for $(X,L)$ such that
	$\Vert\chi\Vert_2 > 0$, then the Futaki invariant of $\chi$
	satisfies $\mathrm{Fut}(\chi) > 0$. 
\end{thm1}
\noindent Here $\Vert\chi\Vert_2$ is a norm of the filtration, and the
filtrations with zero norm play the role of the trivial
test-configuration.
This result is a strengthening
of Stoppa's result~\cite{Sto08}, whose conclusion under the same
assumptions is that 
$(X,L)$ is K-stable, 
since it implies that
the Futaki invariant has to be bounded away from zero uniformly along
certain families of test-configurations. In addition, similarly to
Stoppa's argument, we use the existence result for cscK metrics on blowups due
to Arezzo-Pacard~\cite{AP06}, and the asymptotic Chow stability of
cscK manifolds with no discrete automorphism group due to
Donaldson~\cite{Don01}. 

A key new ingredient in the proof is the Okounkov
body~\cite{Ok96}, and the concave (in our case convex) transform of a
filtration introduced by Boucksom-Chen~\cite{BC09}, which was also used
in the context of test-configurations 
by Witt Nystr\"om~\cite{Ny10}. We review these
constructions in Section~\ref{sec:Okounkov}. 

In addition, the proof relies on the following result, which was stated as a
conjecture in an earlier version of this paper. The result is due to
S. Boucksom, and the proof is presented in the appendix as Theorem~\ref{thm:sz}. 
\begin{thm2}
	Suppose that $S\subset\bigoplus_{k\geqslant 0} H^0(X,L^k)$ 
	is a graded subalgebra which contains an ample series (see
	Definition~\ref{defn:ample}). In addition suppose that 
	\[ \lim_{k\to\infty} k^{-n}\dim S_k < \lim_{k\to\infty}
	k^{-n}\dim H^0(X,L^k),\]
	where $n$ is the dimension of $X$. Then there is a point $p\in
	X$ and a number $\epsilon > 0$, such that
	\[ S_k \subset H^0(X, L^k\otimes I_p^{\lceil k\epsilon
	\rceil}),\]
	for all $k$, where $I_p$ is the ideal sheaf of the point $p$. 
\end{thm2}

In~\cite{Don10} Donaldson introduced a new notion of stability, called
$b$-stability, which is a similar strengthening of $K$-stability, but it
allows for more general families of test-configurations (and
even more general degenerations) than what we are able to encode using
filtrations so far. In
Section \ref{sec:bstability} we make some basic
observations about the relation with filtrations. In particular we will
show that 
Proposition~\ref{prop:bstab}, which is a variant Theorem A
above,  gives a strengthening  of the main theorem
in~\cite{Don11}. 

\subsection*{Acknowledgements}
I would like to thank Jeff Diller, Simon Donaldson, Sonja Mapes and Jacopo Stoppa for
useful conversations. I am
also grateful for Sebastien Boucksom providing the proof of
Theorem B as an appendix to this paper. 
This work was
partially supported by NSF grant DMS-0904223.

\section{Test-configurations, the Futaki invariant and the Chow weight}
We briefly recall the notion of test-configuration and their Futaki
invariants from
Donaldson~\cite{Don02}. Given a polarized variety $(X,L)$, a
test-configuration 
for $(X,L)$ is a flat, polarized, $\mathbf{C}^*$-equivariant
family $(\mathcal{X},\mathcal{L})\to\mathbf{C}$, where the generic fiber
is isomorphic to $(X,L^r)$ for some $r > 0$. The number $r$ is called
the exponent of the test-configuration. The Futaki invariant and
the Chow weight are both computed in terms of the induced
$\mathbf{C}^*$-action on the central fiber $(X_0, L_0)$. Namely let us
write $d_{rk}$ for the dimesion of, and $w_{rk}$ for the total weight of the
action on $H^0_{X_0}(L_0^k)$. For large $k$ we have expansions
\begin{equation}\label{eq:expand} \begin{aligned}
	d_{rk} = a_0(rk)^n + a_1(rk)^{n-1} + \ldots \\
	w_{rk} = b_0(rk)^{n+1} + b_1(rk)^n + \ldots,
\end{aligned}
\end{equation}
where $n$ is the dimesion of $X$. We write the expansions in terms of
$rk$ instead of $k$, because we think of the numbers $d_{rk}$ and
$w_{rk}$ as being related to the
line bundles $L^{rk}$ on $X$. For instance this way the number $a_0$ is
the volume of $(X,L)$, and does not depend on the exponent $r$ of the
test-configuration. The Futaki invariant of the family is
defined to be
\[ \mathrm{Fut}(\mathcal{X},\mathcal{L}) = \frac{a_1b_0 - a_0b_1}{a_0^2}.\]
Note that the Futaki invariant remains unchanged if we replace the line
bundle $\mathcal{L}$ on $\mathcal{X}$ by a power. 
The Chow weight of the family is 
\begin{equation}\label{eq:Chow}
	\mathrm{Chow}_r(\mathcal{X},\mathcal{L}) = \frac{rb_0}{a_0} -
\frac{w_r}{d_r}.
\end{equation}
In the notation for the Chow weight, the subscript $r$ means that the
test-configuration has exponent $r$. We emphasize this, 
since unlike for the Futaki invariant, it makes a
difference if we replace $\mathcal{L}$ by a power, and later on we will
not have the line bundle explicit in the notation. In fact we have
\[ \mathrm{Chow}_{rk}(\mathcal{X},\mathcal{L}^k) = \frac{krb_0}{a_0} -
\frac{w_{kr}}{d_{kr}},\]
from which it is easy to check that
\begin{equation}\label{eq:chowinf}
	\mathrm{Fut}(\mathcal{X}) = \lim_{k\to\infty}
	\mathrm{Chow}_{rk}(\mathcal{X},\mathcal{L}^k).
\end{equation}
For the record we state the following definitions (see for example
Ross-Thomas~\cite{RT04}). 
\begin{defn}
	The polarized manifold $(X,L)$ 
	is $K$-stable, if the Futaki invariant is
	positive for every test-configuration, for which the central
	fiber is not isomorphic to $X$. 

	The polarized manifold $(X,L)$ is asymptotically
	Chow stable, if there is some $k_0$, such that
	the Chow weight is positive for all
	test-configurations with exponent greater than
	$k_0$, and whose central fiber is not isomorphic
	to $X$. 
\end{defn}

We will need to define a norm for test-configurations. There are various
options for this, analogous to various $L^p$ norms for functions. Given
a test-configuration as above, write $A_{rk}$ for the generator of the
$\mathbf{C}^*$-action on $H^0_{X_0}(L_0^k)$. So
$\mathrm{Tr}(A_{rk})=w_{rk}$ in our notation above. We then have an
expansion
\begin{equation}\label{eq:TrA2}
	\mathrm{Tr}(A_{rk}^2) = c_0(rk)^{n+2} + \ldots
\end{equation}
for large $k$, and we define the norm $\Vert\mathcal{X}\Vert_2$ 
of the test-configuration by
\begin{equation}\label{eq:normchi}
	\Vert\mathcal{X}\Vert_2^2 = c_0 - \frac{b_0^2}{a_0}.
\end{equation}
This is analogous to the $L^2$-norm of functions, normalized to be zero
on constants. Note that the norm is
unchanged if we replace $\mathcal{L}$ by a power. 

In what follows, it will be natural to think of test-configurations
slightly differently. Recall that all test-configurations of exponent $r$
for $(X,L)$ can be obtained by embedding $X\hookrightarrow
\mathbf{P}(V^*)$ for $V=H^0(X,L^r)$, and then choosing a
$\mathbf{C}^*$-action on $V^*$. The test-configuration is then obtained
by taking the $\mathbf{C}^*$-orbit of $X$, and completing this family
across the origin with the flat limit. 
Let us assume that the weights of the dual action on $V$ are
all positive (we can modify the original $\mathbf{C}^*$-action by
another action with constant weights, without changing any of the
invariants of the
test-configuration). The weight decomposition under this
$\mathbf{C}^*$-action gives rise to a flag
\begin{equation}\label{eq:flag}
	\{0\} = V_0 \subset V_1 \subset\ldots\subset V_k = V, 
\end{equation}
where $V_i$ is spanned by the eigenvectors with weight at most $i$. The
point we want to make is that the test-configuration is determined by
this flag. This can be seen as follows. Suppose that
$\lambda_1,\lambda_2 : \mathbf{C}^*\to GL(V)$ are two one-parameter
subgroups, with the same flag \eqref{eq:flag}. Let $v\in V$ be such
that $\lambda_1(t)\cdot v = t^iv$ for all $t$, and let
$v=w_1+\ldots+w_i$ be the weight decomposition of $v$ with respect to
$\lambda_2$. Note that only weights up to $i$ occur in this
decomposition since $\lambda_2$ has the same flag as $\lambda_1$. It
follows that
\[ \lambda_2(t)^{-1}\lambda_1(t)\cdot v =
t^i(t^{-1}w_1+\ldots+t^{-i}w_i), \]
and so
\[ \lim_{t\to 0} \lambda_2(t)^{-1}\lambda_1(t)\cdot v = w_i. \]
Applying this to each weight vector for $\lambda_1$, we see
that $M(t) = \lambda_2(t)^{-1}\lambda_1(t)$ extends to a map
$M : \mathbf{C}\to GL(V)$ (the fact that $M(0)$ is invertible follows by
interchanging $\lambda_1,\lambda_2$ in the above argument). It then
follows that the families in $\mathbf{P}(V^*)$ defined by the orbits of
$X$ under the dual actions of $\lambda_1$ and $\lambda_2$ are equivalent.
Because of this, we will often speak of the test-configuration induced
by a flag in $H^0(X,L^r)$, and also we will make use of the matrices
$A_k$ as above, as if we have already picked a $\mathbf{C}^*$-action
giving rise to the flag. The point of view of flags is useful more
generally  in GIT, see for example
Section 2.2 in Mumford-Fogarty-Kirwan~\cite{MFK94}. 

\section{Filtrations}\label{sec:filt}
Let $(X,L)$ be a polarized manifold. Let us write $R_k = H^0(X,L^k)$,
and
\[ R = \bigoplus_{k\geqslant 0} R_k = \bigoplus_{k\geqslant 0} H^0(X,L^k)\]
for the homogenenous coordinate ring of $(X,L)$. We will assume
throughout the paper that
$R_1$ generates $R$. 
\begin{defn}\label{defn:filtration}
	A \emph{filtration} of $R$ is a chain of finite dimensional subspaces
	\[ \mathbf{C} = F_0R \subset F_1R \subset F_2R \subset \ldots
	\subset R,\]
	such that the following conditions hold:
	\begin{enumerate}
		\item The filtration is multiplicative, i.e. $(F_iR)(
			F_jR) \subset F_{i+j}R$ for all $i,j\geqslant 0$,
		\item The filtration is compatible with the grading $R_k$ of
			$R$, i.e. if $f\in F_iR$ for some $i\geqslant 0$
			then each homogeneous
			piece of $f$ is in $F_iR$,
		\item We have
			\[ \bigcup_{i\geqslant 0} F_iR = R.\]
	\end{enumerate}
\end{defn}

This notion of filtration is more or less equivalent to the one used in
Witt Nystr\"om~\cite{Ny10}. The main difference is that our indices are
the negative of his, and in addition our filtration is ``scaled'' so
that each nontrivial piece has positive index. In analogy to~\cite{Ny10}
we could allow more general filtrations, where $F_iR$ can be non-empty
for negative $i$ as well, assuming a boundedness condition. 
Namely we assume that for some constant $C$, the
filtration $F_iR_k$ on the degree $k$ piece of $R$ satisfies
$F_{-Ck}R_k=\{0\}$. In this case we could define a new filtration by
letting $F'_iR_k = F_{i-Ck}R_k\oplus\mathbf{C}$ for all $i\geqslant 0$,
and it would satisfy our conditions. In addition in \cite{Ny10} the
filtered pieces are indexed by real numbers, while ours are integers,
but this is also not a significant restriction.  

Given a filtration $\chi$ of $R$, the Rees algebra of
$\chi$ is defined by 
\[ \mathrm{Rees}(\chi) = \bigoplus_{i\geqslant 0} (F_iR)t^i \subset
R[t].\]
This is a flat $\mathbf{C}[t]$-subalgebra of $R[t]$, since it is a
torsion-free $\mathbf{C}[t]$-module (see Corollary 6.3 in
Eisenbud~\cite{Eis95}). In addition
the associated graded algebra of $\chi$ is
\[ \mathrm{gr}(\chi) = \bigoplus_{i\geqslant 0}
(F_iR)/(F_{i-1}R),\]
where $F_{-1}R = \{0\}$. Note that both of these algebras have
two gradings. One grading comes from the grading of $R$, while
another, denoted by $i$ here, comes from the filtration. 
The fiber of the Rees algebra of $\chi$ at non-zero $t$ is
isomorphic to $R$,
while the fiber at $t=0$ is isomorphic to $\mathrm{gr}(\chi)$. 

\subsection{Finitely generated filtrations}\label{sec:fgfilt}
Let us call a filtration
finitely generated, if its Rees algebra is finitely generated. 
In this case the filtration gives rise to a
test-configuration for $(X,L)$, whose total space is
$\mathrm{Proj}_{\mathbf{C}[t]}\mathrm{Rees}(\chi)$,
where the grading in the $\mathrm{Proj}$ construction is the grading
coming from $R$ (which is supressed in the notation).  The central fiber
of the test-configuration is
$\mathrm{Proj}_\mathbf{C}(\mathrm{gr}(\chi))$, where again we are using
the grading induced by the grading of $R$. The grading given by the
filtration is the one which induces a $\mathbf{C}^*$-action
on the family as well as on its central fiber. In order for the action to
be compatible with multiplication on $\mathbf{C}$, the function $t$
must have weight $-1$. This implies that in terms of sections on the
central fiber, the sections in $(F_iR)/(F_{i-1}R)$ have weight $-i$.
It is these weights that are used in the calculation of the Futaki
invariant.    

Finitely generated filtrations therefore give rise to test-configurations. 
Conversely, Witt Nystr\"om~\cite{Ny10} showed that 
every test-configuration gives rise to a finitely generated
filtration of $R$. Let us
recall  the
construction briefly. We are thinking of a test-configuration as a
$\mathbf{C}^*$-equivariant flat family
$\pi:(\mathcal{X},\mathcal{L})\to\mathbf{C}$, such that the generic
fiber is isomorphic to $(X,L^r)$ for some power $r > 0$. If $s\in
R_{r}$, then we can think of $s$ as a section of
$\mathcal{L}$ over the fiber $\pi^{-1}(1)$. Using the
$\mathbf{C}^*$-action we can extend $s$ to a meromorphic section
$\overline{s}$ of $\mathcal{L}$ over the whole of $\mathcal{X}$. We
then define
\begin{equation}\label{eq:filt1}
	F_iR_{r} = \{ s\in R_{r}\,:\, t^i\overline{s}\text{ is holomorphic
on }\mathcal{X}\}.
\end{equation}
Note that Witt Nystr\"om uses $t^{-i}\overline{s}$ instead of
$t^i\overline{s}$, so his filtration is the opposite of ours. This
filtration may not satisfy that $F_0R_r$ is empty (which we require of
our filtrations), but this can easily be achieved by first modifying the
$\mathbf{C}^*$-action on $\mathcal{L}$ by an action with constant
weights. 
We can then extend this filtration of $R_r$ to a filtration
of $R$ as follows. Let $N$ be such that $F_NR_r=R_r$. Then let
$\mathcal{R}\subset R[t]$ be the $\mathbf{C}[t]$-subalgebra generated by
\begin{equation}\label{eq:filt3}
	R_1t^N\oplus\left( \bigoplus_{i=1}^N (F_i R_r)t^i\right).
\end{equation}
We can then define a filtration 
\begin{equation}\label{eq:filt2}
	F_iR = \{ s\in R\, :\, t^is\in \mathcal{R}\}. 
\end{equation}
The point of adding in the generators $R_1t^N$ is to ensure that for
every $s\in R$ there is some $i$ such that $s\in F_iR$, i.e. that
Condition (3) in Definition~\ref{defn:filtration} holds. At the same
time because of the choice of $N$, the induced filtration on $R_{kd}$
for any $k > 0$ conincides with that obtained by the construction in
Equation \eqref{eq:filt1} applied to sections of $\mathcal{L}^k$. It
follows from this that $\mathrm{Proj}_{\mathbf{C}[t]}\mathcal{R}$ is
isomorphic to the test-configuration $\mathcal{X}$ that we started with.

\subsection{General filtrations}\label{sec:genfilt}
The main point of considering
filtrations instead of test-configurations is that filtrations are
more general, since they are not all finitely generated. 
At the same time any 
filtration can be approximated by finitely generated filtrations in
the following sense. Suppose that $\mathcal{R}$ is the Rees algebra
corresponding to a filtration $\chi$, and in addition let
$\mathcal{R}_i$ be a sequence of finitely generated
$\mathbf{C}[t]$-subalgebras of $\mathcal{R}$, such that
\[ \mathcal{R}_1\subset\mathcal{R}_2\subset \ldots \subset\mathcal{R},\]
and $\bigcup_{i > 0}\mathcal{R}_i = \mathcal{R}$. Then using the
construction in Equation~\eqref{eq:filt2} we obtain a family of induced
filtrations $\chi_i$, and we think of $\chi$ as the limit of the
sequence $\chi_i$. 

Given a filtration $\chi$ it will be convenient to choose one specific
approximating sequence $\chi^{(k)}$. Namely for each $k$ we let
$\chi^{(k)}$ be
the finitely generated filtration induced by the filtration on $R_k$
given by $\chi$, exactly as above, in Equations \eqref{eq:filt3} and
\eqref{eq:filt2}. Equivalently, we can think of $\chi^{(k)}$ as the
test-configuration of exponent $k$, corresponding to the filtration on
$R_k$ as we described at the end of the last section.

We will use the following comparison between $\chi^{(k)}$ and $\chi$
many times. For any $l$, let us write $F_i'R_{kl}$ and $F_iR_{kl}$ for
the filtrations on $R_{kl}$ given by $\chi^{(k)}$ and $\chi$
respectively. Then by construction $F_i'R_k = F_iR_k$ for all $i$, and
$F_i'R_{kl}\subset F_iR_{kl}$ for $l > 1$. Indeed, once we fix the
filtration $\chi^{(k)}$ on $R_k$, then for all $l > 1$ and $i$,
the space $F_i'R_{kl}$ is the smallest possible subspace of $R_{kl}$,
which is compatible with the multiplicative property of $\chi^{(k)}$.

\begin{defn}\label{defn:filtFut}
	Given a filtration $\chi$, we define the Futaki invariant, and
	$k^\text{th}$ Chow weight of
	$\chi$ to be
	\[ \begin{aligned}
		\mathrm{Fut}(\chi) &= \liminf_{k\to\infty}
		\mathrm{Fut}(\chi^{(k)},\mathcal{L}) \\
		\mathrm{Chow}_k(\chi) &=
		\mathrm{Chow}_k(\chi^{(k)},\mathcal{L}),
	\end{aligned}\]
	where $(\chi^{(k)},\mathcal{L})$ 
	is the test-configuration of exponent $k$
	defined by the filtration
	on $R_k$ induced by $\chi$. We also define a norm of the
	filtration by
	\[ \Vert\chi\Vert_2 = \liminf_{k\to\infty} \Vert\chi^{(k)}\Vert_2.\]
	We will see in Lemma~\ref{lem:norm}
	that the $\liminf$ in the definition of the norm is actually a limit. 
\end{defn}

There are other possible numerical invariants of a filtration, related
to the Futaki invariant. For instance in Donaldson's work~\cite{Don11}
the relevant quantity is the asymptotic Chow weight of a filtration,
which is $\liminf_{k\to\infty} \mathrm{Chow}_k(\chi)$. We will explain
this in Section~\ref{sec:b-stab}. Note that if the filtration is
finitely generated, then the asymptotic Chow weight is equal to the
Futaki invariant, because of Equation \eqref{eq:chowinf}.

\begin{example} \label{ex:filt1}
	For filtrations, the role of trivial test-configurations is
	played by filtrations with zero norm. 
	This includes filtrations
which are limits of non-trivial test-configurations. 
For example on $\mathbf{P}^1$, we can
define the filtration (where $R_k = H^0(\mathcal{O}(k))$)
\[ F_iR_k = \{\text{all sections vanishing at }(0:1)\},\]
for $0 < i < k$, and 
\[ F_iR_k = R_k,\]
for $i \geqslant k$. It is not hard to check that the norm of this
filtration is 0. The corresponding sequence of test-configurations is
simply deformation to the normal cone of the point $(0:1)$, with smaller and
smaller parameters as $k\to \infty$ (see Ross-Thomas~\cite{RT06}). 
While none of these test-configurations is trivial, it
is reasonable that their limit should be thought of as being trivial,
and in particular the Futaki invariant of this filtration is zero.  
\end{example}

\begin{example}\label{ex:trivial}
On the other hand there are also non-trivial test-configurations which
have zero norm. For example the test-configuration for $\mathbf{P}^1$,
whose central fiber is a double line (i.e. the family of conics
$z^2-txy=0$ as $t\to 0$) has zero norm, even
though it has non-zero Futaki invariant. Note that after 
taking the normalization of the total space, 
the test-configuration becomes a product configuration.\end{example}

We say that a filtration $\chi$ is \emph{destabilizing}, if
$\Vert\chi\Vert_2 > 0$, and $\mathrm{Fut}(\chi)\leqslant 0$. We
expect that if $X$ admits a cscK metric in the class
$c_1(L)$ and has no holomorphic vector fields, then no destabilizing
filtration exists. This is a slightly stronger statement than saying
that $(X,L)$ is K-stable, since certain limiting objects are also
required to have positive Futaki invariant. On the other hand the
condition $\Vert\chi\Vert_2 > 0$ does exclude some non-trivial
test-configurations which are considered in K-stability, 
like the one in Example~\ref{ex:trivial}. At the same time 
it was pointed out by
Li-Xu~\cite{LX11} that even in the definition of K-stability one should
not consider test-configurations such as these by restricting attention to
test-configurations with normal total space. The reason is that there
are always certain non-normal test-configurations, which are
non-trivial, but have zero Futaki invariant. We therefore believe that
the condition $\Vert\chi\Vert_2 > 0$ is very natural even for
test-configurations.

\section{Examples}\label{sec:toric}
For toric varieties Donaldson~\cite{Don02} showed that any
rational piecewise linear convex function on the moment polytope
gives rise to a test-configuration of the variety. We will show that 
at the same time any positive 
convex function on the polytope gives rise
to a filtration of the homogeneous coordinate ring. Since adding a
constant to a rational piecewise linear convex function only changes the
test-configuration by an action on the line bundle with constant
weights, it is not restrictive to only consider positive functions. 

Suppose that $f:\Delta \to
\mathbf{R}$ is a positive convex function, 
where $\Delta$ is the moment polytope
corresponding to the polarized toric variety $(X,L)$. For us $\Delta$ is
closed, so $f$ is automatically bounded, although in Donaldson's
work~\cite{Don02} some unbounded convex functions also play a role. At
the same time we can allow functions which are not continuous at the
boundary of $\Delta$. 
A basis of
sections of $H^0(X,L^k)$ can be identified with the rational
lattice points in $\Delta\cap \frac{1}{k}\mathbf{Z}^n$. If 
\[ \alpha\in \Delta\cap \frac{1}{k}\mathbf{Z}^n,\]
write $s_\alpha$ for the corresponding section of $L^k$. Now on
$R_k=H^0(X,L^k)$ define the filtration as follows:
\begin{equation}\label{FiRk}
	F_i R_k = \mathrm{span}\left\{ s_\alpha\,:\, kf(\alpha)
\leqslant i\right\}.
\end{equation}
The convexity of $f$ ensures that the filtration of the graded ring of
$(X,L)$ defined in this way will satisfy the multiplicative property.
The other two conditions in Definition~\ref{defn:filtration} also
follow easily. 

We can also see what the sequence of test-configurations are, which
approximate the filtration defined by $f$. 
Let $f_k:\Delta\to\mathbf{R}$ be the largest
convex function which on the points
$\alpha\in\Delta\cap\frac{1}{k}\mathbf{Z}^n$ is defined by
\[ f_k(\alpha) = \frac{1}{k}\lceil kf(\alpha)\rceil.\] 
Then the filtration defined on $R_k$ by \eqref{FiRk} 
using the function $f$ is the same as that obtained by the same formula,
but using the function $f_k$. So the test-configuration obtained 
from the filtration
on the piece $R_k$ can be seen as the toric test-configuration defined
by the function $f_k$, which is a rational piecewise-linear
approximation to the function $f$. As for the Futaki invariants,
Donaldson showed that the test-configuration corresponding to $f_k$ has
Futaki invariant up to a constant factor given by
\[ \mathrm{Fut}(f_k) = \int_{\partial\Delta} f_k\,d\sigma - a\int_\Delta
f_k\,d\mu,\]
where $d\sigma$ is a certain measure on the boundary, and $a$ is a
normalizing constant ($a=a_1/a_0$ in the notation of
Equation~\eqref{eq:expand}). Since $f_k$ is a decreasing sequence of
functions converging to $f$ pointwise, we have
\[ \lim_{k\to\infty} \mathrm{Fut}(f_k) = \int_{\partial\Delta}
f\,d\sigma - a\int_\Delta f\,d\mu.\]
In~\cite{Don02} this functional plays an important role even when
defined on convex
functions which are not piecewise linear. It is therefore useful that
it can still be interpreted algebro-geometrically, as the Futaki
invariant of a non-finitely generated filtration. 

Another instance where more general convex functions appear is in the
study of optimal test-configurations for toric varieties~\cite{GSz07_2}.
Note that the optimal destabizing convex functions constructed in that
paper are not known to be bounded, so the filtration given by
Equation~\ref{FiRk} might not satisfy Condition (3) in
Definition~\ref{defn:filtration}. We hope 
that with more work one can show that the optimal
destabilizing convex functions are actually bounded, but in any case
this filtration should 
be thought of as being 
analogous to the Harder-Narasimhan filtration of an unstable vector
bundle. It is tempting to speculate that in general, on any
unstable manifold 
$(X,L)$ one can define such an optimal destabilizing filtration.

This picture can be extended to bundles of toric varieties, in
particular to ruled surfaces, following~\cite{GSzThesis}. 
In this way, the ``optimal destabilizing
test-configurations'' that we found in~\cite{GSz07_1} can also be seen
as filtrations. In addition
Apostolov-Calderbank-Gauduchon-T\o{}nnesen-Friedman~\cite{ACGT3} found
an example of a $\mathbf{P}^1$-bundle over a 3-fold
that does not admit an extremal metric, but
appears to be only destabilized by a non-algebraic degeneration (it has
not been shown that there are no destabilizing test-configurations).
This also fits into the above picture applied to toric bundles, and thus
can also be thought of as a filtration.

\section{The Okounkov body}\label{sec:Okounkov}
The Okounkov body~\cite{Ok96} is a convenient way to package some 
information about the graded ring $R$ and its filtrations, as shown by
Boucksom-Chen~\cite{BC09}, and Witt Nystr\"om~\cite{Ny10}. In this section we
briefly recall the main points of this, but see~\cite{BC09} and also
Lazarsfeld-Musta\c{t}\v{a}~\cite{LM09} for more details.  

First we recall the construction of the Okounkov body.
Choose a point $p\in X$ and a set of local holomorphic coordinates
$z_1,\ldots,z_n$ centered at $p$. Let $s\in H^0(X,L)$ be a section which
does not vanish at $p$. Then every section $f\in H^0(X,L^k)$ can be
written near $p$ as
\begin{equation}\label{eq:section}
	f = s^k\cdot(\text{power series in }z_1,\ldots,z_n).
\end{equation}
We use the graded lexicographic order on monomials. This means that
monomials with larger total degree are larger, and monomials with the
same degree are ordered using the lexicographic order. Writing
$R=\bigoplus H^0(X,L^k)$, we
can define a map
\[ \nu : R\mapsto \mathbf{Z}^n,\]
such that $\nu(f)$ is equal to the exponent of the
lowest order term in the expansion
(\ref{eq:section}). For every $k > 0$ we then define the subset
$P_k\subset \mathbf{Z}^n$ given by
\[ P_k = \left\{ \nu(f)\, :\, f\in R_k\right\}\subset
\mathbf{Z}^n.\]
The Okounkov body is defined to be  the closure
\[ P = \overline{\bigcup_{k\geqslant 1} \frac{1}{k} P_k}.\]
The property that $\nu(fg)=\nu(f)+\nu(g)$ can be used to show that
$P$ is a convex body in the positive orthant of $\mathbf{R}^n$. Let us
write $\Delta_\epsilon\subset\mathbf{R}^n$ for the $n$-simplex
\[ \Delta_\epsilon = \{(a_1,\ldots,a_n)\,:\, a_i\geqslant 0, \sum
a_i\leqslant \epsilon\}.\]
It will be useful to know that $P$ contains $\Delta_\epsilon$ for small
$\epsilon$ and for this it is important that we are using the graded
lexicographic order and not the ungraded version. 
\begin{lem}\label{lem:simplex}
	For sufficiently small $\epsilon>0$ we have $\Delta_\epsilon
	\subset P$. More precisely there exists some $\epsilon > 0$ such
	that for sufficiently large $k$ we have $
	\Delta_{k\epsilon-1}\cap\mathbf{Z}^n \subset P_k$. 
\end{lem}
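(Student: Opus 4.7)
The plan is to exhibit, for each lattice point $a = (a_1,\ldots,a_n)$ in $\Delta_{k\epsilon-1}$, an explicit section $f\in R_k$ with $\nu(f)=a$. This proves the second, more precise assertion, and the containment $\Delta_\epsilon\subset P$ then follows by scaling and taking the closure.

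The key input is that, since $L$ is ample, there exists a fixed integer $r>0$ such that $L^r$ separates $1$-jets at $p$; equivalently the map $H^0(X,L^r) \to L^r_p \otimes \mathcal{O}_{X,p}/\mathfrak{m}_p^2$ is surjective. Because $s(p)\neq 0$, the section $s^r$ trivializes $L^r$ near $p$, so surjectivity lets me choose $\sigma_1,\ldots,\sigma_n \in H^0(X,L^r)$ with
\[ \sigma_i/s^r = z_i + h_i(z),\]
where each $h_i$ vanishes to order $\geqslant 2$ at $p$. I claim that $\epsilon := 1/r$ works.

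Indeed, fix $a\in \mathbf{Z}_{\geqslant 0}^n$ with $|a|:=\sum_i a_i \leqslant k\epsilon-1 = k/r-1$, so that $k-r|a| \geqslant r > 0$, and consider
\[ f := s^{k-r|a|}\,\sigma_1^{a_1}\cdots \sigma_n^{a_n} \in R_k.\]
Near $p$ the expansion is
\[ f/s^k = \prod_{i=1}^n (z_i+h_i(z))^{a_i} = z_1^{a_1}\cdots z_n^{a_n} + (\text{terms of total degree}>|a|).\]
Because we use the \emph{graded} lexicographic order, every monomial of strictly higher total degree is strictly larger than $z^a$, so $\nu(f)=a$ and $a\in P_k$. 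This gives $\Delta_{k\epsilon-1}\cap\mathbf{Z}^n\subset P_k$ for all large $k$.

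For the first assertion, any rational $q\in\Delta_{\epsilon'}$ with $\epsilon'<\epsilon$ can be written $q=a/k$ with $|a|\leqslant k\epsilon'<k\epsilon-1$ for $k$ sufficiently large, placing $q$ in $(1/k)P_k$; taking the closure and letting $\epsilon'\uparrow\epsilon$ yields $\Delta_\epsilon\subset P$. There is no serious obstacle---the existence of the jet-separating sections $\sigma_i$ is a standard consequence of ampleness---but the graded (rather than ungraded) lexicographic order is essential, since otherwise the higher-order perturbations $h_i$ in $f/s^k$ could produce monomials lex-smaller than $z^a$ and destroy the identity $\nu(f)=a$.
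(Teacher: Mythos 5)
Your proof is correct, but it reaches the conclusion by a different mechanism than the paper. The paper fixes a rational $\epsilon$ below the Seshadri constant of $p$ and uses the exact sequence $0\to \mathcal{I}_p^{k\epsilon}L^k\to L^k\to \mathcal{O}_{k\epsilon p}\otimes L^k|_p\to 0$ together with the vanishing of $H^1(X,\mathcal{I}_p^{k\epsilon}L^k)$ for large $k$ to conclude that the full $(k\epsilon-1)$-jet at $p$ can be prescribed arbitrarily; taking the jet to be exactly $z^{\mathbf{a}}$ produces the desired section in one stroke. You instead use only separation of $1$-jets at the single fixed level $r$ (a consequence of very ampleness of $L^r$) and then exploit the multiplicativity of the valuation: the products $s^{k-r|a|}\sigma_1^{a_1}\cdots\sigma_n^{a_n}$ realize every lattice point of $\Delta_{k/r-1}$, because the perturbations $h_i$ only contribute monomials of strictly higher total degree, which the graded lexicographic order discards. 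Your route is more elementary (no Seshadri constants, no asymptotic cohomology vanishing) and even gives the inclusion for an explicit $\epsilon=1/r$ valid for all $k\geqslant r$ rather than merely for $k$ large; the paper's route yields the potentially larger value of $\epsilon$ given by the Seshadri constant, which is immaterial here since only the existence of some positive $\epsilon$ is used. Both arguments correctly identify the graded (as opposed to ungraded) lexicographic order as the essential point.
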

\begin{proof}
	Let $\epsilon > 0$ be a small rational number, smaller than the
	Seshadri constant of $p$ with respect to $L$ (in other words
	the $\mathbf{Q}$-line bundle $L-\epsilon E$ on the blowup 
	$Bl_pX$ is ample). Let $\mathcal{I}_p$ be the ideal sheaf of
	$p$. If $k$ is such that $k\epsilon$ is an
	integer, consider the exact sequence
	\[ 0\longrightarrow \mathcal{I}_p^{k\epsilon}L^k 
	\longrightarrow L^k
	\longrightarrow \mathcal{O}_{k\epsilon p}\otimes L^k|_p
	\longrightarrow 0.\]
	For large $k$ the cohomology group
	$H^1(X,\mathcal{I}_p^{\epsilon k}L^k)$ vanishes, so the map
	\[ H^0(X,L^k) \longrightarrow H^0(X,\mathcal{O}_{k\epsilon
	p}\otimes L^k|_p) \]
	is surjective. On the other hand this simply maps a section
	of $L^k$ to its $(k\epsilon-1)$-jet at $p$. It follows that for
	any $n$-tuple $\mathbf{a}=(a_1,\ldots,a_n)\in \mathbf{Z}^n$ with 
	$a_i\geqslant 0$ and $\sum a_i\leqslant k\epsilon- 1$ there
	exists a section $f\in H^0(X,L^k)$ such that
	$\nu(f)=\mathbf{a}$. This implies that the Okounkov body $P$
	contains $\Delta_\epsilon$.  
\end{proof}

Now suppose that we have a filtration $\{F_iR\}$ on $R$ as in
Definition~\ref{defn:filtration}. 
Boucksom-Chen~\cite{BC09} showed how this gives rise
to a convex function on the Okounkov body (or concave in their case,
since our conventions differ).
Briefly the construction goes as follows. For every $t\geqslant 0$ we
can define a graded subalgebra $R^{\leqslant t}\subset R$ whose degree
$k$ piece is 
\begin{equation}\label{eq:Rleqt}
	R^{\leqslant t}_k = F_{\lfloor tk\rfloor}R_k.
\end{equation}
Using only sections of $R^{\leqslant t}$ we can repeat the construction
of the Okounkov body, and we will obtain a closed convex subset
$P^{\leqslant t}\subset P$, which will be non-empty as long as $t > t_0$
for some constant $t_0$. The convex transform of the filtration is
defined to be the function $G:P\to\mathbf{R}$ given by
\[ G(x) = \inf\{ t\,:\, x\in P^{\leqslant t}\}. \]
Then $G$ is convex, because of the following convexity
property:
\[ tP^{\leqslant s_1} + (1-t)P^{\leqslant s_2} \subset 
P^{\leqslant ts_1 + (1-t)s_2}.\]
It follows that $G$ is continuous on the interior of $P$, and in
\cite{BC09} it is shown that $G$ is lower semicontinuous on the whole of
$P$. The restriction of $G$ to the simplex $\Delta_\epsilon$ from
Lemma~\ref{lem:simplex} is also upper semicontinuous (see
Gale-Klee-Rockafellar~\cite{GKR}), so in fact $G$ is continuous near
the corner $0\in P$. 

We can arrive at the convex function $G$ in a slightly different way
too. Namely for each $k$, we let $G_k:P\to\mathbf{R}$ be the convex
envelope of the function 
\begin{equation}\label{eq:gk}
	\begin{aligned}
	g_k  : \frac{1}{k}P_k &\to \mathbf{R} \\
	\alpha &\mapsto \min\{ i/k\,:\, \text{there is }f\in F_iR_k\text{
	such that }\nu(f) = k\alpha\},
\end{aligned}\end{equation}
where we can let $G_k=\infty$ outside the convex hull of
$\frac{1}{k}P_k$. 
It can then be shown that $G_k \geqslant G$ for all $k$, and $G_k\to G$
uniformly on compact subsets of the interior of $P$, but $G_k$ might not
converge to $G$ on the boundary of $P$. 

A crucial point (see~\cite{Ny10}) is that for each $k > 0$ and any
function $T$ we have
\begin{equation}\label{eq:sumT}
	\sum_{i\geqslant 1} T(i/k)
\cdot(\dim F_iR_k - \dim F_{i-1}R_k) =
\sum_{\alpha\in
\frac{1}{k}P_k} T(g_k(\alpha)).
\end{equation}
In particular, if the filtration comes from a test-configuration, and
we write $A_k$ for the generator of the induced $\mathbf{C}^*$-action on
on the sections over the central fiber, then 
\begin{equation}\label{eq:Trgk}
	\mathrm{Tr}(A_k) = \sum_{i\geqslant 1} -i\cdot(\dim F_iR_k - \dim
F_{i-1}R_k) = -k\sum_{\alpha\in\frac{1}{k} P_k} g_k(\alpha).
\end{equation}
At the same time for continuous $T$, we have the asymptotic result
\begin{equation}\label{eq:limT}
	\lim_{k\to\infty} \frac{1}{k^n} \sum_{\alpha\in\frac{1}{k}P_k}
T(g_k(\alpha)) = \int_P T\circ G\,d\mu,
\end{equation}
where $\mu$ is the Lebesgue measure on $P$. This shows for instance that
if $\chi$ was induced by a test-configuration, then in the expansions
\eqref{eq:expand} we have
\begin{equation}\label{eq:a0b0}
	a_0 = \mathrm{Vol}(P), \quad
	b_0 = -\int_P G_\chi\,d\mu,
\end{equation}
where $G_\chi$ is the convex transform of the filtration $\chi$. Note
that the coefficients $a_1$ and $b_1$ cannot be expressed in terms of
the Okounkov body and the convex transform in general. This is only
possible for very special filtrations, for example the filtrations on
toric varieties that we discussed in Section~\ref{sec:toric}. 

We will often start with a filtration $\chi$, and look at the
corresponding sequence of test-configurations $\chi^{(k)}$ obtained from
the induced filtration on $R_k$. The following lemma gives some simple
properties of the corresponding convex transforms. 
\begin{lem}\label{lem:filtapprox}
	Let $\chi$ be a filtration on $R$, and for each $k$, let
	$\chi^{(k)}$ be the test-configuration given by the filtration
	on $R_k$. Let us also write $\chi^{(k)}$ for the corresponding
	filtration that we defined in Section~\ref{sec:filt}, which is
	canonically defined on the Veronesi subalgebra
	$\bigoplus_{i\geqslant 0} R_{ki}$. For each
	$l$ we can then construct functions 
	\[ g_l, g^{(k)}_l : \frac{1}{l}P_l \to \mathbf{R}, \]
	according to \eqref{eq:gk}, and also we have the concave
	transforms $G, G^{(k)}$. These functions satisfy the following
	properties:
	\begin{enumerate}
		\item We have $g^{(k)}_k = g_k$, and
			$g^{(k)}_{kl}\geqslant g_{kl}$ for each $k,l$. 
		\item If the filtration $\chi$ satisfies 
			$R_1\subset F_NR$, then $g^{(k)}_{kl}\leqslant
			N$ for all $k,l$. In addition
			$G^{(k)}\leqslant N$ for each $k$. 
		\item $G^{(k)}\geqslant G$ for all $k$, and 
			$G^{(k)}\to G$ uniformly on compact subsets of
			the interior of $P$. 
	\end{enumerate}
\end{lem}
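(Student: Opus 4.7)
The plan is to treat the three parts in sequence, each reducing to the explicit construction of the Rees algebra of $\chi^{(k)}$ from Section~\ref{sec:filt}. Part (1) amounts to unpacking the construction. The equality $g^{(k)}_k = g_k$ is immediate because $\chi^{(k)}$ agrees with $\chi$ on $R_k$ by construction, so $F^{(k)}_iR_k = F_iR_k$ for all $i$. The inequality $g^{(k)}_{kl} \geqslant g_{kl}$ follows from the inclusion $F^{(k)}_iR_{kl} \subset F_iR_{kl}$ observed just before Definition~\ref{defn:filtFut}, since any $f \in F^{(k)}_iR_{kl}$ with $\nu(f) = kl\alpha$ also lies in $F_iR_{kl}$, so the minimum over $i$ defining $g^{(k)}_{kl}(\alpha)$ is at least the one defining $g_{kl}(\alpha)$.

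For part (2), the hypothesis $R_1 \subset F_NR$, combined with multiplicativity and grading compatibility of $\chi$, gives $F_{kN}R_k = R_k$, so one may take $kN$ as the top of the filtration on $R_k$ in building $\chi^{(k)}$. Multiplicativity of $\chi^{(k)}$ then yields $R_{kl} = R_k^l \subset F^{(k)}_{klN}R_{kl}$ for every $l \geqslant 1$, forcing $g^{(k)}_{kl}(\alpha) \leqslant N$ on $\frac{1}{kl}P_{kl}$. For the uniform bound $G^{(k)} \leqslant N$, the graded subalgebra $(R^{(k)})^{\leqslant N}$ used in the Boucksom--Chen construction applied to $\chi^{(k)}$ contains the Veronese $\bigoplus_{m \geqslant 0} R_{km}$ in full, since its degree-$km$ piece equals $F^{(k)}_{Nkm}R_{km} = R_{km}$ by the preceding argument. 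The Okounkov body of this Veronese (with the $R$-grading) coincides with $P$, so the Okounkov body of $(R^{(k)})^{\leqslant N}$ is all of $P$, giving $G^{(k)} \leqslant N$ on $P$.

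The first half of part (3) reduces to the inclusion $\mathrm{Rees}(\chi^{(k)}) \subset \mathrm{Rees}(\chi)$. The generators $R_1 t^{kN}$ of $\mathrm{Rees}(\chi^{(k)})$ lie in $\mathrm{Rees}(\chi)$ because $R_1 \subset F_NR \subset F_{kN}R$, and the generators $F_iR_k t^i$ lie in $\mathrm{Rees}(\chi)$ directly by definition. This gives $F^{(k)}_iR \subset F_iR$ for every $i$, which via the defining infimum of the convex transform yields $G^{(k)} \geqslant G$ pointwise on $P$.

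The main step, and the one requiring slightly more care, is the uniform convergence $G^{(k)} \to G$ on compact subsets of the interior of $P$. The plan is a squeeze argument. Applying the general facts recalled in Section~\ref{sec:Okounkov}---that for any filtration $\chi'$ the associated convex envelopes satisfy $(G')_l \geqslant G'$ for every $l$ and $(G')_l \to G'$ uniformly on compact subsets of the interior---to the filtration $\chi^{(k)}$ itself yields $(G^{(k)})_l \geqslant G^{(k)}$ with $(G^{(k)})_l \to G^{(k)}$ on interior compacta. Specializing to $l=k$ and using part (1) to identify $(G^{(k)})_k$ with $G_k$ produces the upper bound $G^{(k)} \leqslant G_k$. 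Combined with $G^{(k)} \geqslant G$ from the first half of part (3), this sandwiches $G \leqslant G^{(k)} \leqslant G_k$ on all of $P$, and since $G_k \to G$ uniformly on compact subsets of the interior, the same convergence holds for $G^{(k)}$.
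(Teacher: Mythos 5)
Your proposal is correct and follows essentially the same route as the paper: the key inclusion $F^{(k)}_iR_{kl}\subset F_iR_{kl}$ (with equality for $l=1$) drives parts (1) and the first half of (3), multiplicativity from $R_k=F^{(k)}_{kN}R_k$ gives part (2), and the sandwich $G\leqslant G^{(k)}\leqslant G_k$ together with the uniform convergence $G_k\to G$ on interior compacta gives the second half of (3). The only difference is cosmetic: you justify the inclusion via $\mathrm{Rees}(\chi^{(k)})\subset\mathrm{Rees}(\chi)$ and the bound $G^{(k)}\leqslant G_k$ via the general fact $(G')_l\geqslant G'$ applied to $\chi^{(k)}$ at $l=k$, where the paper asserts these directly from minimality of $F^{(k)}_iR_{kl}$ and the definition of the convex envelope.
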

\begin{proof}
	Let $F_iR$ be the filtration $\chi$, and for a fixed $k$ 
	write $F_i'R$ for the filtration $\chi^{(k)}$. Then by the
	construction of $\chi^{(k)}$ we have $F_i'R_k = F_i R_k$ for
	each $i$ since the filtrations on $R_k$ induced by $\chi$ and
	$\chi^{(k)}$ conincide. In addition, for each $l > 1$ and $i$,
	$F_i'R_{kl}$ is the smallest possible subspace, such that the
	multiplicative property holds for the filtration $\chi^{(k)}$.
	It follows that 
	\begin{equation}\label{eq:FiRkl}
		F_i'R_{kl} \subset F_iR_{kl}\,\text{ for each }i, l
		\geqslant 1.
	\end{equation}
	We now prove the 3 statements that we need. 
	\begin{enumerate}
		\item Since $F_i'R_{kl} \subset F_i R_{kl}$ for all
			$i,l\geqslant 1$, we have
			$g^{(k)}_{kl} \geqslant g_{kl}$. In addition
			equality holds for $l=1$ since $F_i'R_k =
			F_iR_k$ for all $i$. 
		\item If $R_1\subset F_NR$, then the multiplicative
			property implies $R_k\subset F_{kN}R$. On $R_k$
			the fitrations $\chi^{(k)}$ and $\chi$ coincide,
			so we also have $R_k\subset F_{kN}'R$. Using the
			multiplicative property again, $R_{kl}\subset
			F_{klN}'R$. This implies that
			$g^{(k)}_{kl}\leqslant N$ for all $k, l$. At the
			same time, using the notation \eqref{eq:Rleqt}
			for the filtration $\chi^{(k)}$ we have
			$R_{kl}^{\leqslant N} = R_{kl}$, so from the
			construction of the convex transform $G^{(k)}$
			we have $G^{(k)}\leqslant N$. 
		\item	The fact that $G^{(k)}\geqslant G$ follows from
			\eqref{eq:FiRkl} and the definition of the
			convex transform. Moreover $G^{(k)}$ is bounded
			above by the convex envelope of $g^{(k)}_k =
			g_k$, but on compact subsets of the interior of
			$P$, the convex envelopes of $g_k$ converge to
			$G$ as $k\to\infty$. 
	\end{enumerate}
\end{proof}

One consequence is the following formula for the norm of a filtration
$\chi$.
\begin{lem}\label{lem:norm}
	Given a filtration $\chi$, its norm $\Vert\chi\Vert_2$
	can be expressed in
	terms of the convex transform $G_\chi$ as follows:
\begin{equation}\label{eq:norm}
	\Vert\chi\Vert^2_2 = \int_P (G_\chi - \overline{G}_\chi)^2\,d\mu,
\end{equation}
where $\overline{G}_\chi$
is the average of $G_\chi$ on $P$. 
\end{lem}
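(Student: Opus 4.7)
The plan is to first give an exact formula for $\Vert\chi^{(k)}\Vert_2^2$ in terms of the convex transform $G^{(k)}$, then pass to the limit using Lemma~\ref{lem:filtapprox}.

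First I would compute the relevant coefficients for the test-configuration $\chi^{(k)}$. Since $\chi^{(k)}$ has exponent $k$, the expansions \eqref{eq:expand} are in the variable $kl$, and the weights on $H^0_{X_0}(L_0^l)$ come from the filtration $\chi^{(k)}$ restricted to $R_{kl}$. Applying \eqref{eq:sumT} with $T(x) = x$ and $T(x) = x^2$ to this filtration, together with the asymptotic \eqref{eq:limT}, gives
\[ \frac{w_{kl}^{(k)}}{(kl)^{n+1}} = -\frac{1}{(kl)^n}\sum_{\alpha\in\frac{1}{kl}P_{kl}} g^{(k)}_{kl}(\alpha) \;\longrightarrow\; -\int_P G^{(k)}\,d\mu, \]
and similarly $\mathrm{Tr}((A^{(k)}_{kl})^2)/(kl)^{n+2} \to \int_P (G^{(k)})^2\,d\mu$. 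The volume coefficient $a_0 = \mathrm{Vol}(P)$ is independent of $k$ by the remark following \eqref{eq:expand}, so from \eqref{eq:normchi} I get
\[ \Vert\chi^{(k)}\Vert_2^2 = \int_P (G^{(k)})^2\,d\mu - \frac{1}{\mathrm{Vol}(P)}\left(\int_P G^{(k)}\,d\mu\right)^2 = \int_P (G^{(k)} - \overline{G^{(k)}})^2\,d\mu, \]
by expanding the square.

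Next I would pass to the limit as $k\to\infty$. Since each $R_1$ is finite dimensional and $\bigcup_i F_iR = R$, there exists $N$ with $R_1\subset F_NR$, so by Lemma~\ref{lem:filtapprox}(2) we have $0\leqslant G^{(k)}\leqslant N$ uniformly, and by part (3) we have $G^{(k)}\to G$ uniformly on compact subsets of the interior of $P$. Since the topological boundary of $P$ has Lebesgue measure zero, $G^{(k)}\to G$ almost everywhere on $P$, and the dominated convergence theorem yields $\int_P G^{(k)}\,d\mu \to \int_P G\,d\mu$ and $\int_P (G^{(k)})^2\,d\mu \to \int_P G^2\,d\mu$. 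Therefore $\overline{G^{(k)}}\to \overline{G}$ and
\[ \Vert\chi^{(k)}\Vert_2^2 \;\longrightarrow\; \int_P (G - \overline{G})^2\,d\mu. \]
In particular the $\liminf$ in the definition of $\Vert\chi\Vert_2$ is a genuine limit, yielding the stated formula.

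The main step requiring care is the dominated convergence argument: uniform convergence of $G^{(k)}$ to $G$ holds only on compact subsets of the interior of $P$, not up to the boundary. But the uniform bound $G^{(k)}\leqslant N$ from Lemma~\ref{lem:filtapprox}(2), combined with the fact that $\partial P$ is Lebesgue null, is exactly what resolves this obstacle. The other steps are essentially bookkeeping using the identities \eqref{eq:sumT}, \eqref{eq:Trgk}, and \eqref{eq:limT} set up in the previous section, together with the definition \eqref{eq:normchi} of the norm.
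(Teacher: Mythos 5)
Your proposal is correct and follows essentially the same route as the paper: compute $\Vert\chi^{(k)}\Vert_2^2 = \int_P (G^{(k)})^2\,d\mu - \mathrm{Vol}(P)^{-1}\bigl(\int_P G^{(k)}\,d\mu\bigr)^2$ via \eqref{eq:sumT} and \eqref{eq:limT}, then let $k\to\infty$ using the uniform bound and the locally uniform convergence $G^{(k)}\to G_\chi$ from Lemma~\ref{lem:filtapprox}. Your explicit dominated-convergence justification (using that $\partial P$ is Lebesgue null) merely spells out the limit step the paper leaves implicit.
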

\begin{proof}
	Recall that we defined the norm $\Vert\chi\Vert_2$ by
	approximating $\chi$ using finitely generated filtrations
	$\chi^{(k)}$, induced by the filtration $\chi$ on $R_k$.
	Let us write
	$c_0^{(k)}$ for the constant in the
	expansion \eqref{eq:TrA2} corresponding
	to the test-configuration $\chi^{(k)}$, and $G^{(k)}$ for the
	convex transform of $\chi^{(k)}$. From 
	\eqref{eq:sumT} and \eqref{eq:limT}
	applied to $T(x)=x^2$, we get	
	\[ c_0^{(k)} = \int_P (G^{(k)})^2\,d\mu.\]
	Using also the formulas analogous to \eqref{eq:a0b0} for
	$\chi^{(k)}$ and the definition of the norm in
	\eqref{eq:normchi}, we get
	\[ \Vert\chi^{(k)}\Vert_2^2 = \int_P (G^{(k)})^2\,d\mu -
	\frac{1}{\mathrm{Vol}(P)}\left(\int_P G^{(k)}\,d\mu\right)^2.\]
	By Lemma~\ref{lem:filtapprox} we have $G^{(k)}\to G_\chi$ uniformly
	on compact subsets of the
	interior of $P$, and also all the
	functions are uniformly bounded by the same constant. Therefore
	the formula \eqref{eq:norm} follows by letting $k\to \infty$. 
\end{proof}

It is important to note that the Okounkov body $P$ and the convex
transform $G_\chi$ will
in general depend on the point and local coordinates 
chosen in the construction of
the Okounkov body. The volume of $P$ and 
the integrals in \eqref{eq:a0b0} and \eqref{eq:norm} are however
independent of these choices. 

We record the following lemma, which we will use in the next section.
\begin{lem}\label{lem:Chowunstable}
	Suppose that $\chi$ is a filtration for $(X,L)$. 
	Write $G_\chi$ for the convex transform, and $g_k$ for the function
	defined in \eqref{eq:gk}. If
	\begin{equation}\label{eq:Chow1}
		\sum_{\alpha\in
		\frac{1}{k}P_k} g_k(\alpha) - \overline{G}_\chi\dim R_k < 0
	\end{equation}
	for infinitely many $k$, 
	then $(X,L)$ is asymptotically Chow unstable.  
\end{lem}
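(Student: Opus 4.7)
The plan is to unwind the definitions. By Definition~\ref{defn:filtFut}, $\mathrm{Chow}_k(\chi) = \mathrm{Chow}_k(\chi^{(k)},\mathcal{L})$, and by \eqref{eq:Chow} this equals $\frac{k b_0^{(k)}}{a_0} - \frac{w_k}{d_k}$, where $b_0^{(k)}$ and $a_0$ are the leading coefficients attached to the test-configuration $\chi^{(k)}$, while $w_k$ and $d_k$ are the total weight and dimension on $H^0_{X_0}(L_0)$ for $\chi^{(k)}$. So what I want is that the stated inequality forces $\mathrm{Chow}_k(\chi) < 0$ for infinitely many $k$, which is precisely asymptotic Chow instability (since a strictly negative Chow weight rules out $\chi^{(k)}$ having central fiber isomorphic to $X$).

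Next I identify each piece with something explicit on the Okounkov body. By \eqref{eq:a0b0} applied to $\chi^{(k)}$, we have $a_0 = \mathrm{Vol}(P)$ and $b_0^{(k)} = -\int_P G^{(k)}\, d\mu$, where $G^{(k)}$ is the convex transform of the test-configuration $\chi^{(k)}$. On the weight side, \eqref{eq:Trgk} applied to $\chi^{(k)}$ gives $w_k = \mathrm{Tr}(A_k) = -k \sum_{\alpha \in \frac{1}{k}P_k} g^{(k)}_k(\alpha)$, and by Lemma~\ref{lem:filtapprox}(1) we have $g^{(k)}_k = g_k$, which is the function associated with the original filtration $\chi$. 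Finally, $d_k = \dim R_k$.

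Substituting these into the Chow weight formula gives
\[
\mathrm{Chow}_k(\chi) \;=\; \frac{k}{\dim R_k}\left[\sum_{\alpha\in\frac{1}{k}P_k} g_k(\alpha) \;-\; \overline{G^{(k)}}\cdot \dim R_k\right],
\]
where $\overline{G^{(k)}}$ denotes the average of $G^{(k)}$ over $P$. Now I use the key comparison $G^{(k)} \geqslant G_\chi$ from Lemma~\ref{lem:filtapprox}(3), which implies $\overline{G^{(k)}} \geqslant \overline{G}_\chi$, and therefore
\[
\sum_{\alpha\in\frac{1}{k}P_k} g_k(\alpha) - \overline{G^{(k)}}\cdot \dim R_k \;\leqslant\; \sum_{\alpha\in\frac{1}{k}P_k} g_k(\alpha) - \overline{G}_\chi\cdot\dim R_k.
\]
By hypothesis the right-hand side is strictly negative for infinitely many $k$, hence so is $\mathrm{Chow}_k(\chi)$, yielding asymptotic Chow instability.

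There is no serious obstacle here; the only subtleties are bookkeeping, namely matching the sign convention (weight $-i$ on $F_iR/F_{i-1}R$) that enters \eqref{eq:Trgk}, and noting that the strict negativity of the Chow weight automatically excludes the possibility that $\chi^{(k)}$ is the trivial configuration, so it does qualify against the definition of asymptotic Chow stability.
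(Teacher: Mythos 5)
Your argument is correct and follows essentially the same route as the paper: both reduce the claim to showing $\mathrm{Chow}_k(\chi^{(k)})$ is bounded above by $\frac{k}{\dim R_k}$ times the quantity in \eqref{eq:Chow1}, using \eqref{eq:Trgk}, \eqref{eq:limT} and the comparison from Lemma~\ref{lem:filtapprox} (the paper phrases the key inequality as $g^{(k)}_{kl}\geqslant g_{kl}$ applied to $\mathrm{Tr}(A_{kl})$ before passing to the limit in $l$, whereas you phrase it as $b_0^{(k)}=-\int_P G^{(k)}\,d\mu$ together with $G^{(k)}\geqslant G_\chi$, which is the same fact). No gap.
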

\begin{proof}
	As in Lemma~\ref{lem:filtapprox}, 
	consider the test-configuration $\chi^{(k)}$ 
	given by the induced filtration
	on $R_k$. Let us also write $A_{kl}$ for the generator of the
	$\mathbf{C}^*$-action on $R_{kl}$ given by the
	test-configuration $\chi^{(k)}$. 
	Writing $g^{(k)}_l$ for the functions
	corresponding to $\chi^{(k)}$ as in
	Lemma~\ref{lem:filtapprox}, we have
	\[ \mathrm{Tr}(A_{kl}) = -kl\sum_{\alpha\in\frac{1}{kl} P_{kl}}
	g^{(k)}_{kl}(\alpha),\]
	from Equation~\eqref{eq:Trgk}. From Lemma~\ref{lem:filtapprox}
	we then get
	\[ \mathrm{Tr}(A_{kl}) \leqslant -kl\sum_{\alpha\in\frac{1}{kl}
	P_{kl}} g_{kl}(\alpha),\]
	but crucially, equality holds for $l=1$. It then follows from
	Equation~\eqref{eq:limT}, that
	\[ \lim_{k\to\infty} \frac{1}{(kl)^{n+1}} \mathrm{Tr}(A_{kl})
	\leqslant -\int_P G_\chi\,d\mu.\]
	From the defining formula~\eqref{eq:Chow} for the Chow weight of
	this test-configuration, we get
	\[ \mathrm{Chow}_k(\chi^{(k)}) 
	\leqslant -\frac{k}{\mathrm{Vol(P)}}\int_P
	G_\chi\,d\mu + \frac{k}{\dim R_k}\sum_{\alpha\in\frac{1}{k} P_k}
	g_k(\alpha).\]
	Since this is the Chow weight of a test-configuration with
	exponent $k$, and by assumption this expression
	is negative for infinitely
	many $k$, it follows that $(X,L)$ is asymptotically Chow
	unstable. 
\end{proof}

\section{Extending Stoppa's argument}\label{sec:stoppa}
In this section
we will prove Theorem A, which we state again here. 

\begin{thm}\label{thm:main} 
	Suppose that $X$ admits a cscK metric in $c_1(L)$ and the
	automorphism group of $(X,L)$ is finite. If $\chi$ is a
	filtration such that $\Vert\chi\Vert_2 > 0$, then
	$\mathrm{Fut}(\chi) > 0$. 
\end{thm}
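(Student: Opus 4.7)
The plan is to argue by contradiction, following the blueprint of Stoppa's argument but adapted to the setting of filtrations. Assume that $\chi$ is destabilizing: $\Vert\chi\Vert_2>0$ and $\mathrm{Fut}(\chi)\leqslant 0$. By passing to a subsequence, I may assume that $\mathrm{Fut}(\chi^{(k)})\to\mathrm{Fut}(\chi)$ along the approximating test-configurations $\chi^{(k)}$ of exponent $k$. The plan is to use this filtration to manufacture a filtration on the coordinate ring of the blowup $\mathrm{Bl}_p X$ at a suitable point $p$, carrying enough negativity to violate asymptotic Chow stability of the blowup, and then derive a contradiction with the Arezzo--Pacard construction of a cscK metric on $\mathrm{Bl}_p(X,L-\epsilon E)$ together with Donaldson's theorem that such cscK manifolds with no holomorphic vector fields are asymptotically Chow stable.

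The first key step is to produce the point $p$ where the filtration is ``concentrated.'' Since $\Vert\chi\Vert_2>0$, Lemma~\ref{lem:norm} says that the convex transform $G_\chi$ is not a.e.\ constant on the Okounkov body $P$, so there exists a value $t$ such that the graded subalgebra $R^{\leqslant t}=\bigoplus_k F_{\lfloor tk\rfloor}R_k$ is an ample series whose asymptotic volume $\lim k^{-n}\dim R^{\leqslant t}_k$ is strictly less than $\lim k^{-n}\dim R_k$. Theorem B then supplies a point $p\in X$ and a number $\epsilon>0$ such that $R^{\leqslant t}_k\subset H^0(X,L^k\otimes I_p^{\lceil k\epsilon\rceil})$ for every $k$. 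This is the crucial analytic-geometric input and it is where the appendix result does its work.

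Next, the plan is to translate everything to the blowup $\pi:\widetilde{X}=\mathrm{Bl}_pX\to X$ with exceptional divisor $E$, polarized by $\widetilde{L}_\delta = \pi^*L-\delta E$ for a small rational $\delta<\epsilon$. Pulling back and subtracting off the appropriate order of vanishing at $p$, the filtration $\chi$ induces a filtration $\widetilde{\chi}$ on the homogeneous coordinate ring of $(\widetilde{X},\widetilde{L}_\delta)$. The concentration estimate from Theorem B feeds directly into a computation with the Okounkov body of $\widetilde{X}$ and its convex transform: on the sub-simplex of $P$ corresponding to the high-multiplicity directions at $p$, the convex transform $G_{\widetilde{\chi}}$ inherits the non-constancy of $G_\chi$ in a controlled way, while the overall shift by $\delta$ can be made small. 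Plugging this into the formula of Lemma~\ref{lem:Chowunstable} applied to $(\widetilde{X},\widetilde{L}_\delta)$, the hypothesis $\mathrm{Fut}(\chi)\leqslant 0$ translates into the negativity of $\sum_{\alpha\in\frac{1}{k}P_k}g_k(\alpha)-\overline{G}_{\widetilde{\chi}}\dim R_k$ for infinitely many $k$, so $(\widetilde{X},\widetilde{L}_\delta)$ is asymptotically Chow unstable, contradicting Arezzo--Pacard plus Donaldson for $\delta$ small.

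The main obstacle in executing this plan is the third step: carrying through the bookkeeping that transfers the Futaki/Chow asymptotics from $\chi$ on $(X,L)$ to $\widetilde{\chi}$ on $(\widetilde{X},\widetilde{L}_\delta)$ with quantitative control. One must verify that the gain in ``area under $G$'' from the Theorem B vanishing at $p$ dominates the deficit coming from the polarization twist $-\delta E$ and from passing from $\chi$ to its finite approximations $\chi^{(k)}$. This is where Lemmas~\ref{lem:filtapprox} and~\ref{lem:Chowunstable}, together with the uniform bounds $G^{(k)}\leqslant N$ and $G^{(k)}\to G_\chi$ on compacta, must be combined carefully; the rest is Stoppa's original scheme applied to the approximants $\chi^{(k)}$.
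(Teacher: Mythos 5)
Your overall strategy is the one the paper uses: argue by contradiction, use $\Vert\chi\Vert_2>0$ to make $G_\chi$ non-constant, cut the algebra at a threshold to get a proper subalgebra of smaller volume, invoke Theorem~B to locate a point $p$ and an $\epsilon>0$ where the high part of the filtration concentrates, pass to the blowup $(\mathrm{Bl}_pX, L-\epsilon E)$, and contradict asymptotic Chow stability (Arezzo--Pacard plus Donaldson) via Lemma~\ref{lem:Chowunstable}. However, the step you explicitly defer --- the quantitative transfer to the blowup --- is not bookkeeping; it is where essentially all the content of the proof lives, and as written your sketch does not contain the ideas needed to close it. Concretely, three things are missing. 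First, the threshold cannot be an arbitrary $t$ with $P^{\leqslant t}\subsetneq P$: the paper takes $\Lambda=\tfrac{9}{10}M+\tfrac{1}{10}\overline{G}_\chi$, so that after re-choosing the Okounkov flag at the point $p$ supplied by Theorem~B one gets $G_\chi\geqslant\Lambda=\overline{G}_\chi+9\lambda$ on the corner simplex $\Delta_\epsilon$ (via Lemma~\ref{lem:simplex}) while the essential supremum is only $\overline{G}_\chi+10\lambda$; this gap of size $\lambda$ between the lower bound on $\Delta_\epsilon$ and the upper bound on the face $\partial_1\Delta_\epsilon$ is exactly what produces negativity. Second, the sign comes out of an Euler--Maclaurin expansion of the lattice sum over $\overline{P\setminus\Delta_\epsilon}$, in which the boundary of $\Delta_\epsilon$ is split into $\partial_0\Delta_\epsilon$ and $\partial_1\Delta_\epsilon$ with opposite signs; the upshot is
\[ Ch_m \leqslant m^{n-1}\Bigl(\delta-\frac{\lambda\,\epsilon^{n-1}}{(n-1)!}+C\epsilon^n\Bigr)+O(m^{n-2}), \]
and one must choose $\epsilon$ small first (so $C\epsilon^n$ loses to $\lambda\epsilon^{n-1}/(n-1)!$) and only then $k$ large enough that $\delta=\mathrm{Fut}(\chi^{(k)})$ is small; your sketch does not exhibit this mechanism or this order of quantifiers, and your extra parameter $\delta<\epsilon$ for the blowup polarization conflates the blowup scale with the Futaki smallness.

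Third, the argument above genuinely requires $n\geqslant 2$ (the comparison of $\epsilon^{n-1}$ against $\epsilon^n$, and the blowup construction itself), and the paper handles $n=1$ separately by passing to $Y=X\times X$ with $L_Y=\pi_1^*L\otimes\pi_2^*L$, checking that the induced filtration $\chi^Y$ satisfies $\mathrm{Fut}(\chi^Y)=\mathrm{Fut}(\chi)$ and $\Vert\chi^Y\Vert_2=\sqrt{a_0}\,\Vert\chi\Vert_2$. This reduction is absent from your proposal. So while the skeleton matches the paper's, the proof as proposed has a genuine gap at its central estimate.
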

\begin{proof}
	We will first assume that the dimension $n > 1$. 
	Choose a point in $X$ and local coordinates so that we can
	construct the Okounkov body $P$ of $(X,L)$, and the convex
	transform $G_\chi$ of the filtration. 
	If $\Vert\chi\Vert_2 > 0$, then according to the formula
	\eqref{eq:norm}, the function $G_\chi$ is not constant. Let $M$
	be the essential supremum of $G_\chi$, and $\overline{G}_\chi$
	its average. Let us write
	\[ \Lambda = \frac{9}{10}M + \frac{1}{10}\overline{G}_\chi, \]
	and consider the subalgebra $R^{\leqslant\Lambda}\subset R$. As
	before, write $P^{\leqslant \Lambda}$ for the convex subset of
	$P$ obtained by performing the Okounkov body construction using
	only sections of $R^{\leqslant \Lambda}$. By
	the construction of $G_\chi$ and the choice
	of $\Lambda$, the subset $P^{\leqslant \Lambda}\subset P$ is a
	proper subset. It follows that
	\[ \lim_{k\to\infty} k^{-n}\dim R_k^{\leqslant\Lambda} <
	\lim_{k\to\infty} k^{-n}\dim R_k,\]
	since these limits are just the volumes of
	$P^{\leqslant\Lambda}$ and $P$. 
	In addition it is shown in \cite{BC09} that
	$R^{\leqslant\Lambda}$ contains an ample series (see
        Definition~\ref{defn:ample}). Applying
	Theorem~\ref{thm:sz} we find a point $p\in X$ and a number
	$\epsilon > 0$, such that
	\begin{equation}\label{eq:vanish}
		R^{\leqslant\Lambda}_k \subset H^0(X, L^k\otimes
		I_p^{\lceil k\epsilon\rceil}),
	\end{equation}
	for all $k$. 
	We can now go back and use the point $p$ and any choice of local
	coordinates to construct the Okounkov body $P$, noting that the
	statement \eqref{eq:vanish} is independent of these choices.
	We can also assume that $\epsilon$ is small enough such that the
	simplex $\Delta_\epsilon$ satisfies 
	$\Delta_\epsilon\subset P$ according to
	Lemma~\ref{lem:simplex}.  
	Note that in constructing the Okounkov body, the sections 
	$f\in R_k$ which vanish to order at least $\lceil
	k\epsilon\rceil$ at $p$ all satisfy 
	\[ \frac{1}{k}\nu(f)\in \overline{P\setminus \Delta_\epsilon},\]
	so the convex transform (constructed again with the new choice
	of $p$) satisfies
	\begin{equation}\label{eq:Glarge}
		G_\chi(x) \geqslant \Lambda\text{ for
		}x\in\Delta_\epsilon.
	\end{equation}

	Now consider the sequence of test-configurations
	obtained by restricting the filtration $\chi$ to $R_k$ for each
	$k$, and write $\chi^{(k)}$ for the corresponding filtrations.
	We will argue by contradiction, assuming that
	\begin{equation}\label{eq:infFut}
		\liminf_{k > 0} \mathrm{Fut}(\chi^{(k)}) = 0. 
	\end{equation}
	Following \cite{Sto08} the key step is to obtain from this
	a test-configuration
	for the blowup of $X$ at a suitable point. Let $\delta > 0$ be
	small. Then we can choose $k$ as large as we like, such that 
	$\mathrm{Fut}(\chi^{(k)}) < \delta$, and to simplify notation,
	we let $\eta =
	\chi^{(k)}$. Write $G_\eta$ for the convex transform of $\eta$.  
	Given the point $p$ and parameter $\epsilon$, we
	can consider the filtration induced by $\eta$ on
	the subalgebra
	\[ \bigoplus_{k\geqslant 0} H^0(X,L^k\otimes I_p^{\lceil
	k\epsilon\rceil}) \subset \bigoplus_{k\geqslant 0} R_k.\]
	If $\epsilon$ is rational and 
	less than the Seshadri constant of $p$ in
	$(X,L)$, then this gives rise to a filtration on the blowup
	$(Bl_pX, L - \epsilon E)$, where $E$ is the exceptional divisor.
	Our goal is to prove that if $\delta$ and $\epsilon$ are
	sufficiently small, then we can use
	Lemma~\ref{lem:Chowunstable} applied to this filtration to show that
	the blowup is not asymptotically Chow stable.
	This will give us the required contradiction, since by
	Arezzo-Pacard's result~\cite{AP06} the blowup admits a cscK
	metric for small $\epsilon$, and so is asymptotically Chow
	stable by Donaldson's result~\cite{Don01}. 

	To compute the expression \eqref{eq:Chow1} on the blowup, note
	that we can simply work on the part of the Okounkov body $P$
	given by $\overline{P\setminus\Delta_\epsilon}$. We want to show
	that the numbers
	\begin{equation}
		Ch_m = \sum_{\alpha\in\overline{P\setminus\Delta_\epsilon}\cap
		\frac{1}{m}P_m} g_m(\alpha) -
		\frac{\int_{P\setminus\Delta_\epsilon}
			G_\eta\,d\mu}{\mathrm{Vol}(P\setminus
		\Delta_\epsilon)}\dim H^0(X,L^m\otimes I_p^{\lceil
		m\epsilon\rceil})
	\end{equation}
	are negative for large $m$, where the functions
	$g_m$ are constructed from
	the filtration $\eta$ according to \eqref{eq:gk}. 
	We will focus on those $m$ for which
	$m\epsilon\in\mathbf{Z}$. 
	At this point is it convenient to introduce normalizations
	$\widetilde{G}_\eta = G_\eta - \overline{G}_\eta$, and 
	$\widetilde{g}_m
	= g_m - \overline{G}_\eta$, so that $\widetilde{G}_\eta$ has
	zero average. It is easy to see that we can then
	compute $Ch_m$ using $\widetilde{g}_m$ and $\widetilde{G}_\eta$,
	and we get the same formula:
	\begin{equation}\label{eq:Cm}
		Ch_m = \sum_{\alpha\in\overline{P\setminus\Delta_\epsilon}\cap
		\frac{1}{m}P_m} \widetilde{g}_m(\alpha) -
		\frac{\int_{P\setminus\Delta_\epsilon}
		\widetilde{G}_\eta\,d\mu}{\mathrm{Vol}(P\setminus
		\Delta_\epsilon)}\dim H^0(X,L^m\otimes I_p^{\lceil 
		m\epsilon\rceil}).
	\end{equation}
	Replacing $g_m$ by $\widetilde{g}_m$ corresponds to changing the
	$\mathbf{C}^*$-action on the test-configuration $\eta$ by an
	action with constant weights, and this leaves the Futaki
	invariant unchanged. The advantage is that now in the expansion
	\eqref{eq:expand} for $\eta$ 
	we have $b_0=0$, and $\mathrm{Fut}(\eta) =
	-b_1/a_0$, where $b_1$ is given by (see \eqref{eq:Trgk})
	\begin{equation}\label{eq:gmexpand}
		\sum_{\alpha\in\frac{1}{k}P_k} \widetilde{g}_m(\alpha) 
		= -b_1 m^{n-1} +
		O(k^{n-2}).
	\end{equation}
	At the same time from the Riemann-Roch Theorem we have
	\begin{equation}\label{eq:RRBlX}
		\dim H^0(X, L^m\otimes I_p^{\lceil m\epsilon\rceil} ) =
		(a_0 - \mathrm{Vol}(\Delta_\epsilon))m^n + O(m^{n-1}). 
	\end{equation}
	
	It will be useful to define two
	boundary pieces of $\Delta_\epsilon$, namely let
	$\partial_0\Delta_\epsilon$ consist of those faces which meet in
	the origin, and let $\partial_1\Delta_\epsilon$ be the remaining
	face. In addition we define a boundary measure $d\sigma$, which
	equals the Lebesgue measure on the faces in
	$\partial_0\Delta_\epsilon$, and is a scaling of the Lebesgue
	measure on the remaining face $\partial_1\Delta_\epsilon$, 
	such that the volume of each face
	is $\epsilon^{n-1}/(n-1)!$. 
	Using that $\widetilde{g}_m\geqslant \widetilde{G}_\eta$, we have
	\begin{equation}\label{eq:sum}
		\begin{aligned}
		 \sum_{\alpha\in\overline{P\setminus\Delta_\epsilon}\cap
		 \frac{1}{m}P_m}& \widetilde{g}_m(\alpha)= 
		 \sum_{\alpha\in \frac{1}{m}P_m}
		 \widetilde{g}_m(\alpha) -
		\sum_{\alpha\in(\Delta_\epsilon\setminus
		\partial_1\Delta_\epsilon)\cap\frac{1}{m}P_m}
		\widetilde{g}_m(\alpha) \\
		\leqslant& \sum_{\alpha\in \frac{1}{m}P_m} 
		\widetilde{g}_m(\alpha) -
		\sum_{\alpha\in(\Delta_\epsilon\setminus
		\partial_1\Delta_\epsilon)\cap\frac{1}{m}P_m}
		\widetilde{G}_\eta(\alpha) \\
		=& -m^n \int_{\Delta_\epsilon} \widetilde{G}_\eta\,d\mu
		+m^{n-1}\left(-b_1 -
		\frac{1}{2}\int_{\partial_0\Delta_\epsilon} 
		\widetilde{G}_\eta\,d\sigma +
		\frac{1}{2}\int_{\partial_1\Delta_\epsilon}
		\widetilde{G}_\eta\,d\sigma\right)\\ &+ O(m^{n-2}).
	\end{aligned}\end{equation}
	Here we used an Euler-Maclaurin type formula for the sum of
	$\tilde{G}_\eta$ 
	over lattice points, see for example
	Guillemin-Sternberg~\cite{GS06}. Note that the sign of the
	integral over $\partial_1\Delta_\epsilon$ is different because
	we need to compensate for the fact that the lattice points on
	$\partial_1\Delta_\epsilon$ are missing from the sum.

	It will now be convenient to write $M=\overline{G}_\chi +
	10\lambda$, and so $\Lambda = \overline{G}_\chi + 9\lambda$, where
	$G_\chi$ is the convex transform of the filtration we started
	with. From Lemma~\ref{lem:filtapprox}, $G_\eta\to G_\chi$
	uniformly on compact subsets of the interior of $P$ as
	$k\to\infty$, but also $G_\eta \geqslant G_\chi$, so if $k$ is
	chosen to be large enough, we have
	\begin{equation}\label{eq:ineqs}
		\begin{aligned} G_\eta(x)&\geqslant \overline{G}_\chi + 9\lambda
			\,\text{ for }x\in\Delta_\epsilon, \\
		\int_{\partial_1\Delta_\epsilon}G_\eta\,d\sigma &\leqslant
		(M+\delta)\mathrm{Vol}(\partial_1\Delta_\epsilon)
		= (\overline{G}_\chi + 10\lambda +
		\delta)\frac{\epsilon^{n-1}}{(n-1)!},
	\end{aligned}\end{equation}
	where we also used \eqref{eq:Glarge}. Since $\overline{G}_\eta\to
	\overline{G}_\chi$ as $k\to\infty$, we can choose $k$ large
	enough so that \eqref{eq:ineqs} implies
	\begin{equation}\label{eq:ineqstilde}
		\begin{aligned} \widetilde{G}_\eta(x)&\geqslant 
			9\lambda - \delta
			\,\text{ for }x\in\Delta_\epsilon, \\
			\int_{\partial_1\Delta_\epsilon}\widetilde{G}_\eta\,
			d\sigma &\leqslant
			(10\lambda +
		2\delta)\frac{\epsilon^{n-1}}{(n-1)!},
	\end{aligned}\end{equation}

	Using these bounds in \eqref{eq:sum}, we have, assuming
	$n\geqslant 2$ and $\delta$ is sufficiently small, 
	\begin{equation}\label{eq:sumgm}
		\begin{aligned}
		\sum_{\alpha\in\overline{P\setminus\Delta_\epsilon}\cap
		\frac{1}{m}P_m} \widetilde{g}_m(\alpha) \leqslant &
		-m^n\int_{\Delta_\epsilon}\widetilde{G}_\eta\,d\mu + m^{n-1}
		\left(\delta - \frac{4\lambda\epsilon^{n-1}}{(n-1)!} +
		\delta\frac{(n+2)\epsilon^{n-1}}{2(n-1)!}\right) \\
		&+ O(m^{n-2}) \\
		\leqslant & -m^n\int_{\Delta_\epsilon}\widetilde{G}_\eta
		\,d\mu +
		m^{n-1}\left(\delta -
		\frac{\lambda\epsilon^{n-1}}{(n-1)!}\right) +
		O(m^{n-2}). 
	\end{aligned}
	\end{equation}
	For the other term in the expression \eqref{eq:Cm} for $Ch_m$, we
	have (using that $\widetilde{G}_\eta$ has intergral zero)
	\begin{equation}\label{eq:Cm2}
		\begin{aligned}
			\frac{\int_{P\setminus\Delta_{\epsilon}}
			\widetilde{G}_\eta
			\,d\mu}{\mathrm{Vol(P\setminus\Delta_\epsilon)}}
			\Big[\mathrm{Vol}(P\setminus\Delta_\epsilon)m^n +
			O(m^{n-1})\Big] \geqslant -m^n\int_{\Delta_\epsilon}
			\widetilde{G}_\eta\,d\mu - C\epsilon^nm^{n-1},
		\end{aligned}
	\end{equation}
	for some $C$, at least for large enough $m$. Combining
	\eqref{eq:sumgm} and \eqref{eq:Cm2} in the formula \eqref{eq:Cm}
	we have
	\[ Ch_m \leqslant m^{n-1}\left(\delta -
	\frac{\lambda\epsilon^{n-1}}{(n-1)!} + C\epsilon^n\right) +
	O(m^{n-2}).\]
	Choosing $\epsilon$ sufficiently small, it follows that if
	$\delta$ is small enough (i.e. we chose $k$ large enough when
	setting $\eta=\chi^{(k)}$), then $Ch_m < 0$ for all large $m$.
	This concludes the proof, in the case when $X$ has dimension $n
	> 1$. 

	Suppose now that $n=1$. We then take the product of $X$ with any
	cscK manifold, which has finite automorphism group. For example
	we can take $Y = X\times X$, with the polarization $L_Y =
	\pi_1^*L\otimes \pi_2^*L$, where $\pi_1,\pi_2$ are the two
	projection maps. Writing $R^Y=\bigoplus R^Y_k$ for the
	homogeneous coordinate ring of $(Y,L_Y)$, we have $R^Y_k =
	R_k\otimes R_k$. A filtration $\chi$ 
	for $R$ naturally induces a
	filtration $\chi^Y$ for $R^Y$, simply by letting
	\[ F_iR^Y_k = (F_iR_k)\otimes R_k, \]
	for each $i,k$. Moreover this operation commutes with taking
	the sequence of finitely generated filtrations induced by a
	given filtration. In other words, the filtration
	$(\chi^{(i)})^Y$ coincides with the filtration
	$(\chi^Y)^{(i)}$. Now suppose that $\chi$ is given by a
	test-configuration, and $\chi^Y$ is the induced
	test-configuration for $Y$. Writing $A_k$ and $A_k^Y$ for the
	generators of the corresponding $\mathbf{C}^*$-actions, we can
	calculate that
	\[ \mathrm{Tr}(A^Y_k) = (\dim R_k)\mathrm{Tr}(A_k),\]
	and
	\[ \mathrm{Tr}( (A^Y_k)^2) = (\dim R_k)\mathrm{Tr}(A_k^2).\]
	From these it is straight forward to calculate that
	\[ \begin{aligned}
		\mathrm{Fut}(\chi^Y) &= \mathrm{Fut}(\chi) \\
		\Vert\chi^Y\Vert_2 &= \sqrt{a_0}\Vert\chi\Vert_2,
	\end{aligned}\]
	where $a_0$ is the volume of $(X,L)$ as usual. It follows that
	the $n=1$ case is a consequence of the $n=2$ case that we
	already proved. 
\end{proof}

We prove another similar result, where the Futaki invariant of a
filtration is replaced by the asymptotic Chow weight, which we define as
\begin{equation}\label{eq:asymptchow}
	\mathrm{Chow}_\infty(\chi) = \liminf_{k\to\infty}
\mathrm{Chow}(\chi^{(k)}). 
\end{equation}
Here as before, $\chi^{(k)}$ is the test-configuration induced by the
filtration $\chi$ by restricting $\chi$ to $R_k$. Note that if $\chi$ is
a finitely generated filtration, then because of \eqref{eq:chowinf} we
have $\mathrm{Chow}_\infty(\chi) = \mathrm{Fut}(\chi)$, but in  general
it is not clear what the relationship is between the two invariants. The
asymptotic Chow weight is the relevant notion for the definition of
$b$-stability in~\cite{Don10}.

\begin{prop}\label{prop:bstab}
	Suppose that $X$ admits a cscK metric in $c_1(L)$ and the
	automorphism group of $(X,L)$ is finite. Then if $\chi$ is a filtration
	for $(X,L)$ such that $\Vert\chi\Vert_2 > 0$, then
	$\mathrm{Chow}_\infty(\chi) > 0$. 
\end{prop}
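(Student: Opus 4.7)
The plan is to run the argument of Theorem~\ref{thm:main} line by line, with a Chow weight bound at one specific exponent playing the role that the Futaki invariant bound (valid at all exponents) plays there. Assume for contradiction that $\mathrm{Chow}_\infty(\chi) \leqslant 0$; by \eqref{eq:asymptchow} there is a sequence $k_j \to \infty$ along which $\mathrm{Chow}_{k_j}(\chi^{(k_j)})$ becomes arbitrarily small. First assume $n \geqslant 2$. Apply Theorem~\ref{thm:sz} to $R^{\leqslant \Lambda}$ with $\Lambda = \tfrac{9}{10}M + \tfrac{1}{10}\overline{G}_\chi$, exactly as in Theorem~\ref{thm:main}, to produce a point $p \in X$ and an $\epsilon > 0$, and construct the Okounkov body $P$ using $p$, so that $\Delta_\epsilon \subset P$ and $G_\chi \geqslant \Lambda$ on $\Delta_\epsilon$.

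The key observation is that for any filtration $\chi$, the Chow weight of the test-configuration $\chi^{(k)}$ at its own exponent $k$ satisfies
\[
\mathrm{Chow}_k(\chi^{(k)}) = \frac{k}{\dim R_k}\sum_{\alpha \in \frac{1}{k}P_k} \widetilde{g}_k(\alpha),
\]
which follows directly from the definition \eqref{eq:Chow}, from \eqref{eq:Trgk}, and from the identity $g^{(k)}_k = g_k$ in Lemma~\ref{lem:filtapprox}(1). Setting $\eta = \chi^{(k)}$ for $k = k_j$, the hypothesis $\mathrm{Chow}_k(\eta) < \delta$ thus gives $\sum_\alpha \widetilde{g}_k(\alpha) \leqslant \delta a_0 k^{n-1} + O(k^{n-2})$, which is the exact analogue, at the single degree $m = k$, of the bound $\sum_\alpha \widetilde{g}_m(\alpha) \leqslant \delta a_0 m^{n-1} + O(m^{n-2})$ used in Theorem~\ref{thm:main}. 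This restriction to a single degree is harmless: the induced filtration on the graded algebra of the blowup $(\mathrm{Bl}_pX, L - \epsilon E)$ is itself a test-configuration of exponent $k$, and to achieve a contradiction one only needs to verify that its Chow weight at exponent $k$ is negative.

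Running the estimates \eqref{eq:sum}--\eqref{eq:Cm2} at $m = k = k_j$ verbatim, and using Lemma~\ref{lem:filtapprox}(3) to transfer the inequalities $G_\chi \geqslant \Lambda$ on $\Delta_\epsilon$ and the upper bound for $G_\chi$ on $\partial_1\Delta_\epsilon$ (which is a compact subset of the interior of $P$ for $\epsilon$ small) over to analogous inequalities for $G^{(k_j)}$ when $k_j$ is large, one obtains $Ch_{k_j} < 0$ once $\epsilon$ and $\delta$ are taken sufficiently small. This produces a test-configuration of the blowup of exponent $k_j$ with negative Chow weight for infinitely many $k_j$, contradicting the asymptotic Chow stability of $(\mathrm{Bl}_pX, L - \epsilon E)$, which for small $\epsilon$ follows from Arezzo-Pacard~\cite{AP06} and Donaldson~\cite{Don01}. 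The case $n = 1$ reduces to $n = 2$ as in Theorem~\ref{thm:main}; the required identity $\mathrm{Chow}_\infty(\chi^Y) = \mathrm{Chow}_\infty(\chi)$ follows from $\mathrm{Tr}(A^Y_k) = (\dim R_k)\mathrm{Tr}(A_k)$ and $a_0^Y = a_0^2$, by a direct computation showing $\mathrm{Chow}_k(\chi^Y) = \mathrm{Chow}_k(\chi)$ for every $k$. There is no essentially new obstacle --- indeed the argument is conceptually simpler than Theorem~\ref{thm:main}, because the single-exponent nature of the Chow hypothesis matches precisely the single exponent at which Stoppa's blow-up argument needs to be tested; all the geometric estimates on the Okounkov body carry over unchanged.
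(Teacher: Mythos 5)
Your overall strategy is the right one and matches the paper's: normalize so that $b_0=0$, identify $\mathrm{Chow}_k(\chi^{(k)})$ with $\tfrac{k}{\dim R_k}\sum_\alpha \widetilde{g}_k(\alpha)$, and show that the induced exponent-$k$ test-configuration on $(\mathrm{Bl}_pX, L-\epsilon E)$ has negative Chow weight for infinitely many $k$, contradicting Arezzo--Pacard plus Donaldson. The reduction of $n=1$ to $n=2$ is also fine. But there is a genuine gap at the step where you claim the estimates \eqref{eq:sum}--\eqref{eq:Cm2} run ``verbatim'' at $m=k=k_j$ and that ``there is no essentially new obstacle.'' The inequality \eqref{eq:sum} rests on an Euler--Maclaurin expansion of the lattice sum of $\widetilde{G}_\eta$ over $\Delta_\epsilon$, and the $O(m^{n-2})$ error there has an implied constant depending on the function $\widetilde{G}_\eta=\widetilde{G}^{(k)}$ (e.g.\ on its modulus of continuity or derivatives). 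In Theorem~\ref{thm:main} this is harmless because $k$ (hence $\eta$) is fixed first and then $m\to\infty$; but in the present proposition the only available exponent is $m=k$, and the function in the Riemann sum changes with $k$. Uniform boundedness of $\widetilde{G}^{(k)}$ alone does not control that error term, so the chain of inequalities does not close as written.

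This is precisely the difficulty the paper flags: it replaces the Euler--Maclaurin step by Lemma~\ref{lem:convexsum}, which gives
\[ \sum_{\alpha\in \Delta_c\cap\frac{1}{k}\mathbf{Z}^n} f(\alpha) \geqslant
	k^n\int_{\Delta_{c-\frac{n-1}{k}}} f\,d\mu +
	k^{n-1}L \tfrac{(3n-1)c^{n-1}}{2(n-1)!} - k^{n-2}C(n)L \]
for any \emph{convex} $f\geqslant L$, with $C(n)$ depending only on the dimension. The proof covers the shrunken simplex by cubes of side $1/k$ and applies Jensen's inequality on each cube, so convexity substitutes for derivative control and makes the error uniform in $k$. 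One also has to pay for passing from $\Delta_{\epsilon}$ to $\Delta_{\epsilon - n/k}$, which is why the paper's \eqref{eq:ineqs1} includes the extra bound on $\int_{\Delta_\epsilon\setminus\Delta_{\epsilon-n/k}}\widetilde{G}_\eta\,d\sigma$. Your proposal omits this ingredient entirely; without it (or an equivalent uniform Riemann-sum estimate for the varying convex functions $\widetilde{G}^{(k)}$), the claimed conclusion $Ch_{k_j}<0$ is not established.
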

\begin{proof}[Proof of Proposition~\ref{prop:bstab}]
The proof of this proposition is not too different from the proof of
Theorem~\ref{thm:main}. In fact we can follow the proof of
Theorem~\ref{thm:main} word for word up to Equation~\ref{eq:sum}, except
in Equation~\ref{eq:infFut} we use the Chow weight instead of the Futaki
invariant, and now we will have to control $Ch_m$ for $m=k$. In other
words we will not be able to take $m$ much larger than $k$, as was done
in the proof of Theorem~\ref{thm:main}. This makes the proof more
difficult and the convexity of the convex transform plays a crucial role
when we apply Lemma~\ref{lem:convexsum} below. 

Let us fix a small $\delta >
0$, and suppose initially that $n>1$. 
We can then find arbitrarily large $k$, such that the
test-configuration $\eta = \chi^{(k)}$ satisfies $\mathrm{Chow}(\eta) <
\delta$. As in the proof of Theorem~\ref{thm:main}, we introduce
normalized functions $\widetilde{G}_\eta = G_\eta - \overline{G}_\eta$, and
$\widetilde{g}_k = g_k - \overline{G}_\eta$. 
Then the Chow weight of $\eta$ is given by
\begin{equation}\label{eq:Choweta1}
	\mathrm{Chow}(\eta) = \frac{k}{\dim H^0(X,L^k)}\sum_{\alpha\in
	\frac{1}{k}P_k}
	\widetilde{g}_k(\alpha) < \delta. 
\end{equation}
Moreover using the notation from the proof of Theorem~\ref{thm:main}, if
we choose $k$ large enough, then we can assume that $\widetilde{G}_\eta$
satisfies similar bounds to \eqref{eq:ineqs}: 
\begin{equation}\label{eq:ineqs1}
	\begin{aligned} \widetilde{G}_\eta(x)&\geqslant 9\lambda -
		\delta\,\text{ for }x\in\Delta_\epsilon, \\
		\int_{\Delta_\epsilon\setminus\Delta_{\epsilon-n/k}}
		\widetilde{G}_\eta\,d\sigma &\leqslant
		(10\lambda + 2\delta)
		\mathrm{Vol}(\Delta_\epsilon\setminus\Delta_{
		\epsilon -n/k})
		\leqslant
		(10\lambda +
		2\delta)\frac{n\epsilon^{n-1}}{k(n-1)!},
	\end{aligned}
\end{equation}
for some $\lambda > 0$. As before, we want to control $Ch_k$, given by
the formula \eqref{eq:Cm}, with $k$ instead of $m$. We also have the
inequality \eqref{eq:Cm2} as before, so if $k$ is large enough, then 
\begin{equation}\label{eq:Ckineq}
	Ch_k \leqslant \sum_{\alpha\in\overline{P\setminus \Delta_\epsilon}\cap
	\frac{1}{k}P_k} \widetilde{g}_k(\alpha) + 
	k^n\int_{\Delta_\epsilon} \widetilde{G}_\eta\,d\mu +
	C\epsilon^nk^{n-1}. 
\end{equation}
In this equation we have
\begin{equation}\label{eq:sum2}
	\sum_{\alpha\in\overline{P\setminus\Delta_\epsilon}\cap
	\frac{1}{k}P_k} \widetilde{g}_k(\alpha)
	= \sum_{\alpha\in\frac{1}{k}P_k}\widetilde{g}_k(\alpha) - 
	\sum_{\alpha\in
	(\Delta_\epsilon\setminus\partial_1\Delta_\epsilon)\cap
	\frac{1}{k}P_k} \widetilde{g}_k(\alpha),
\end{equation}
and now we bound the last sum in a different way from what we did before, 
using
Lemma~\ref{lem:convexsum} below. Note that if $k$ is large enough, then 
by changing $\epsilon$ slightly, we can assume that
$k\epsilon\in\mathbf{Z}$. For example we can
replace $\epsilon$ by $\frac{1}{k}\lceil k\epsilon\rceil$ without
changing the last sum in \eqref{eq:sum2}. Then
\[
(\Delta_\epsilon\setminus \partial_1\Delta_\epsilon)\cap \frac{1}{k}P_k
= \Delta_{\epsilon - 1/k}\cap\frac{1}{k}P_k. \]
Using the bound \eqref{eq:ineqs1} together with 
Lemma~\ref{lem:convexsum} applied to the simplex
$\Delta_{\epsilon - 1/k}$, and that $\widetilde{g}_k\geqslant
\widetilde{G}_\eta$ on $\frac{1}{k}P_k$,  
we have 
\[ \sum_{\alpha\in \Delta_{\epsilon -1/k}\cap\frac{1}{k}P_k}
\widetilde{g}_k(\alpha)
\geqslant k^n\int_{\Delta_{\epsilon - n/k}} \widetilde{G}_\eta\,d\mu +
k^{n-1}(9\lambda-\delta)\frac{(3n-1)\epsilon^{n-1}}{2(n-1)!} 
- C_1k^{n-2}, \]
where we can choose $C_1$ to be independent of $\epsilon$ and $k$. 
Using \eqref{eq:ineqs1} again, we get
\[\begin{aligned}
	\sum_{\alpha\in \Delta_{\epsilon -1/k}\cap\frac{1}{k}P_k}
	\widetilde{g}_k(\alpha)
	\geqslant\, & k^n\int_{\Delta_{\epsilon}} \widetilde{G}_\eta\,d\mu -
k^{n-1}(10\lambda+2\delta)\frac{n\epsilon^{n-1}}{(n-1)!} \\
& +
k^{n-1}(9\lambda-\delta)\frac{(3n-1)\epsilon^{n-1}}{2(n-1)!} - C_1k^{n-2} \\
\geqslant &\, k^n\int_{\Delta_\epsilon} \widetilde{G}_\eta\,d\mu 
+ k^{n-1}\left(\frac{5\lambda}{2}-\frac{7n-1}{2}
\delta\right)\frac{\epsilon^{n-1}}{( n-1)!}
- C_1k^{n-2},
\end{aligned}
\]
where we used that $n\geqslant 2$. Putting this together with
\eqref{eq:sum2} into the bound \eqref{eq:Ckineq} for
$Ch_k$, if $\delta$ is sufficiently small we get
\[ Ch_k \leqslant \sum_{\alpha\in\frac{1}{k} P_k} \widetilde{g}_k(\alpha) -
k^{n-1}\left(2\lambda\frac{\epsilon^{n-1}}{(n-1)!} + C\epsilon^n\right)
+C_1k^{n-2}. \] 
Using the bound \eqref{eq:Choweta1} on the Chow weight of $\eta$, this
implies
\[ Ch_k\leqslant
k^{n-1}\left[\delta\mathrm{Vol}(P)-2\lambda\frac{\epsilon^{n-1}}{(n-1)!}
+ C\epsilon^n\right]
+ C_2k^{n-2},\]
where $C_2$ can be chosen to be independent of $\delta$. Now if we
choose $\epsilon$, and then $\delta$ sufficiently small,
then the leading coefficient is negative. So if $k$ is sufficiently
large we will have $Ch_k < 0$, and just as in Theorem~\ref{thm:main}, this
gives a contradiction. In addition just as before, the $n=1$ case can be
reduced to the higher dimensional result. 
\end{proof}

We used the following lemma. 
\begin{lem}\label{lem:convexsum}
	Suppose that for some rational $c\in (0,1)$, the function $f$ 
	is convex on the simplex
	\[ \Delta_c = \{(x_1,\ldots,x_n)\,:\, x_i\geqslant 0,\,
	x_1+\ldots+x_n\leqslant c\}\subset\mathbf{R}^n,\]
	and $f(x)\geqslant L$ for all
	$x\in\Delta_c$.
	There is a constant $C(n)$ depending only on the dimension such
	that for all large $k$ for which $kc\in\mathbf{Z}$ we
	have
	\[ \sum_{\alpha\in \Delta_c\cap\frac{1}{k}\mathbf{Z}^n} 
	f(\alpha) \geqslant
	k^n\int_{\Delta_{c-\frac{n-1}{k}}} f\,d\mu +
	k^{n-1}L \frac{(3n-1)c^{n-1}}{2(n-1)!} 
	- k^{n-2}C(n)L . \]
\end{lem}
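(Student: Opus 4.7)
The plan is to triangulate $\Delta_c$ into primitive lattice simplices, apply on each simplex the standard convex-function bound $\int_\sigma f\,d\mu \leq \tfrac{\mathrm{Vol}(\sigma)}{n+1}\sum_{i=0}^n f(v_i)$ (which follows from the fact that the affine interpolant through the vertices dominates a convex $f$), and then extract the boundary contribution from $f\geq L$ via an Ehrhart-type lattice-point count. The fact that the integral on the right-hand side is over $\Delta_{c-\frac{n-1}{k}}$ rather than $\Delta_c$ will be produced in a separate step, again by using $f\geq L$ on the thin outer shell of width $\tfrac{n-1}{k}$.

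First I would fix a triangulation $\mathcal{T}$ of $\Delta_c$ into $k^nc^n$ primitive lattice simplices (each of volume $1/(n!k^n)$, with vertices in $\tfrac{1}{k}\mathbf{Z}^n\cap\Delta_c$) such that every lattice vertex $v$ lies in at most $(n+1)!$ simplices of $\mathcal{T}$. On lattice cubes strictly interior to $\Delta_c$, a Kuhn triangulation already realises this bound, with equality at deep-interior vertices; in the boundary shell near the outer face $\{\sum_i x_i=c\}$ some care is needed, but a suitable triangulation can be built, e.g.~by a pulling construction anchored at a boundary vertex. This combinatorial construction is the principal technical hurdle.

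Given $\mathcal{T}$, writing $m(v)$ for the number of simplices containing $v$ and summing the per-simplex inequality gives
\[
(n+1)!\,k^n\int_{\Delta_c}f\,d\mu \;\leq\; \sum_v f(v)\,m(v) \;=\; (n+1)!\sum_v f(v) \;-\; \sum_v f(v)\bigl((n+1)!-m(v)\bigr).
\]
Since $f\geq L$ and $(n+1)!-m(v)\geq 0$, the final sum is bounded below by $L\sum_v((n+1)!-m(v))$, and an Ehrhart expansion of the counts
\[
\sum_v\bigl((n+1)!-m(v)\bigr) \;=\; (n+1)!\binom{kc+n}{n} \;-\; (n+1)k^nc^n \;=\; \tfrac{n(n+1)^2}{2}\,c^{n-1}k^{n-1} + O(k^{n-2})
\]
then yields
\[
\sum_v f(v) \;\geq\; k^n\int_{\Delta_c} f\,d\mu \;+\; \tfrac{L(n+1)c^{n-1}}{2(n-1)!}\,k^{n-1} \;+\; O(k^{n-2}L).
\]

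Finally I would split $\int_{\Delta_c}f\,d\mu = \int_{\Delta_{c-(n-1)/k}}f\,d\mu + \int_{\mathrm{shell}}f\,d\mu$ and apply $f\geq L$ on the shell, whose volume is $\tfrac{(n-1)c^{n-1}}{(n-1)!k}+O(k^{-2})$; this contributes an extra $\tfrac{L(n-1)c^{n-1}}{(n-1)!}k^{n-1}+O(k^{n-2}L)$ to the right-hand side. Combining, the coefficient of $Lc^{n-1}k^{n-1}/(n-1)!$ becomes $\tfrac{n+1}{2}+(n-1)=\tfrac{3n-1}{2}$, matching the stated bound; the remaining $O(k^{n-2}L)$ errors are absorbed into $-C(n)k^{n-2}L$ with $C(n)$ depending only on $n$, since $c\in(0,1)$ keeps any $c$-dependence harmless. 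The whole argument thus reduces to the triangulation construction of the first step, which is the only delicate point.
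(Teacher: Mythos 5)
Your overall strategy is sound and genuinely different from the paper's: you triangulate $\Delta_c$ exactly into unimodular lattice simplices and track the vertex-degree deficit $(n+1)!-m(v)$, whereas the paper merely \emph{covers} the shrunken simplex $\Delta_{c-(n-1)/k}$ by lattice cubes, applies Jensen's inequality on each cube, and lets the nonnegativity of $f-L$ absorb all overcounting; the boundary term then comes purely from comparing the Ehrhart count $\#(\Delta_c\cap\frac1k\mathbf{Z}^n)$ with $k^n\mathrm{Vol}(\Delta_{c-(n-1)/k})$. Your arithmetic checks out (the coefficient $\frac{n+1}{2}+(n-1)=\frac{3n-1}{2}$ is correct), and your route would in fact give the slightly stronger statement with the integral over all of $\Delta_c$, which the paper explicitly says it does not pursue.

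However, there is a genuine gap at the step you yourself flag as the ``principal technical hurdle,'' and your proposed repair would not work. The inequality $\sum_v f(v)\bigl((n+1)!-m(v)\bigr)\geqslant L\sum_v\bigl((n+1)!-m(v)\bigr)$ requires $m(v)\leqslant(n+1)!$ at \emph{every} vertex, because $f$ is only bounded below: at any vertex where $m(v)>(n+1)!$ the coefficient of $f(v)$ is negative and $f(v)$ can be arbitrarily large (a convex function on a simplex can blow up toward the boundary while staying above $L$). A pulling triangulation anchored at a boundary vertex $w$ puts $w$ in \emph{every} maximal cell it is pulled through, so $m(w)$ is of order $k^{n-1}$ or worse, and the argument collapses there. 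The gap is fixable, but you must actually exhibit the triangulation: after the unimodular change of coordinates $y_j=x_j+\cdots+x_n$, the dilated simplex becomes the order polytope $\{kc\geqslant y_1\geqslant\cdots\geqslant y_n\geqslant 0\}$, which is a union of cells of the Freudenthal (staircase) triangulation of $\mathbf{R}^n$; since every lattice point of $\mathbf{R}^n$ lies in exactly $(n+1)!$ Freudenthal cells, the restricted triangulation satisfies $m(v)\leqslant(n+1)!$ everywhere, with equality only at deep interior vertices. With that substitution your proof is complete; as it stands, the key combinatorial input is asserted rather than proved, and the paper's cube-covering argument shows how to avoid needing it at all.
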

With some more work it is likely that the integral can be taken over
$\Delta_c$, with a corresponding change in the $k^{n-1}$ term, but for us this
simpler result is enough. Such expansions for Riemann sums over
polytopes are well known (see e.g. Guillemin-Sternberg~\cite{GS06}), but
usually the error term depends on derivatives of the function. The point
of this result is that if $f$ is convex, then we have better control on
the error term. 
\begin{proof}
	First let us assume that $f\geqslant 0$. If $Q$ is a cube with
	volume $1/k^n$, then Jensen's inequality implies that
	\begin{equation}\label{eq:Jensen}
		\frac{1}{2^n}\sum_{v\text{ vertex of }Q} f(v) \geqslant
		k^n\int_Q f\,d\mu. 
	\end{equation}
	Now the key point is that we can cover the simplex
	$\Delta_{c-\frac{n-1}{k}}$ with cubes whose vertices are in
	$\Delta_c\cap\frac{1}{k}\mathbf{Z}^n$. Applying
	\eqref{eq:Jensen} to all of these cubes, we obtain
	\begin{equation}\label{eq:fgeq0}
		\sum_{\alpha\in \Delta_c\cap\frac{1}{k}\mathbf{Z}^n}
		f(\alpha) \geqslant k^n\int_{\Delta_{c-\frac{n-1}{k}}}
		f\,d\mu,
	\end{equation}
	since we will have to count each vertex at most $2^n$ times.
	Vertices near the boundary only need to be counted fewer times,
	but since $f \geqslant 0$, counting them more times just
	increases the sum. 

	In general if $f\geqslant L$, then we apply \eqref{eq:fgeq0} to
	$f-L$, and we get
	\begin{equation}\label{eq:genf}
		\sum_{\alpha\in\Delta_c\cap\frac{1}{k}\mathbf{Z}^n} f(\alpha)
	\geqslant k^n\int_{\Delta_{c-\frac{n-1}{k}}}f\,d\mu -
	k^nL\mathrm{Vol}(\Delta_{c-\frac{n-1}{k}}) + L\cdot
	\#(\Delta_c\cap\frac{1}{k}\mathbf{Z}^n), 
	\end{equation}
	where we know that the number of lattice points in
	$\Delta_c\cap\frac{1}{k}\mathbf{Z}^n$ is given by
	\[ \#(\Delta_c\cap\frac{1}{k}\mathbf{Z}^n) = k^n\frac{c^n}{n!} +
	k^{n-1}\frac{(n+1)c^{n-1}}{2(n-1)!} + O(k^{n-2}).\]
	At the same time
	\[ \mathrm{Vol}(\Delta_{c-\frac{n-1}{k}}) = \frac{c^n}{n!} -
	\frac{c^{n-1}}{k(n-2)!} + O(k^{-2}).\]
	Using these expansions in \eqref{eq:genf}, we get the required
	result. 
\end{proof}

\section{Relation to $b$-stability}\label{sec:bstability}

\subsection{Birationally transformed test-configurations}\label{sec:b-stab}
In~\cite{Don10}
Donaldson introduced a new notion of stability, called $b$-stability,
of which we quickly review one ingredient. See also~\cite{Don11}. 
The starting point is 
a test-configuration $\pi:(\mathcal{X},\mathcal{L})\to\mathbf{C}$ for
the pair 
$(X,L)$, and for simplicity we assume that the exponent of the
test-configuration is 1. In addition, suppose that the central fiber
$X_0$ has a
distinguished component $B$. 
Using this data, Donaldson defines a family of test-configurations
$(\mathcal{X}_i,\mathcal{L}_i)\to\mathbf{C}$. 
Given the same data, we can also define a 
filtration for the homogeneous coordinate ring, 
similarly to the construction of Witt Nystr\"om in
Equation~\ref{eq:filt1}. As
before, given any $s\in H^0(X,L^k)$ we extend this as a
$\mathbf{C}^*$-invariant meromorphic
section $\overline{s}$ of $\mathcal{L}^k$, and now we define for all $i,
k$
\begin{equation}\label{eq:bstab}
	F_i^BR_{k} = \{ s\in R_{k}\,:\, t^i\overline{s}\text{ has no pole at the
generic point of }B\}.
\end{equation}
We might need to modify the $\mathbf{C}^*$-action on
$\mathcal{L}$ by an action with constant weights to ensure that this
filtration satisfies $F_0R = \mathbf{C}$. Let us write $\chi$ for the
resulting filtration. The filtrations of $R_{k}$
for $k\geqslant 1$ induce a sequence of test-configurations
$\chi^{(k)}$, which
coincide with the birationally modified test-configurations defined by
Donaldson. 

One way to see this is using the point of
view of the Rees algebras. Let us write $F_iR$ for the filtration
corresponding to our test-configuration $\mathcal{X}$. Then we can think
of
\[ \bigoplus_{i\geqslant 0} (F_iR_k)t^i \]
as all the holomorphic sections of $\mathcal{L}^k$ over
$\mathcal{X}$, and 
\[ \bigoplus_{i\geqslant 0} (F_i^BR_k)t^i \]
as those meromorphic sections of $\mathcal{L}^k$, which only have poles
on $X_0\setminus B$. In the notation of \cite{Don11}, we can write this
as the sections of $\mathcal{L}^k\otimes\Lambda^m$ for some large enough
$m$, where $\Lambda^m$ is the sheaf of meromorphic functions with poles
of order at most $m$ along $X_0\setminus B$. In Donaldson's construction
we need to take sections $\overline{\sigma}_a$ which give a basis in
each fiber of
$\pi_*(\mathcal{L}^k\otimes\Lambda^m)$. These sections give an embedding
of $X\times\mathbf{C}^*$ into $\mathbf{P}^N\times\mathbf{C}$ where $\dim
R_k = N+1$, and the new family $(\mathcal{X}_k,\mathcal{L}_k)$ is the
closure of the image of this embedding. 
More explicitly, let us choose a decomposition of $R_k$
as a direct sum 
\[ R_k = \bigoplus R_{k,i},\]
where for each $i$ we have
\[ F_i^BR_k = \bigoplus_{j\leqslant i} R_{k,j}\]
Then choose a basis $\{\sigma_a\}$ for $R_k$ such that each $\sigma_a$
is in one of the $R_{k,i}$, i.e. $\sigma_a\in R_{k,i_a}$ for some $i_a$.
We can then define $\overline{\sigma}_a = t^{i_a}\sigma_a$ for each $a$.
Since these span the space of sections of
$\mathcal{L}^k\otimes\Lambda^m$ over the central fiber under the
restriction map
\[ \bigoplus_{i\geqslant 0} (F_i^BR_k)t^i \to \bigoplus_{i\geqslant 1}
(F_i^BR_k)/(F_{i-1}^BR_k),\]
they give a basis of sections for $\pi_*(\mathcal{L}^k\otimes\Lambda^m)$
at each point. The embedding of $X\times\mathbf{C}^*\to
\mathbf{P}^N\times \mathbf{C}$ is then given by
\[ (x, t) \mapsto ([t^{a_0}\sigma_0(x):\ldots:t^{a_N}\sigma_N(x)], t).
\]
The closure of this is precisely the test-configuration for $X$ given by
the $\mathbf{C}^*$-action with weights $a_0,\ldots,a_N$, which is the
same as the test-configuration given by the filtration $F_i^B$ on $R_k$.  
Therefore the sequence of birationally transformed test-configuration
$(\mathcal{X}_k,\mathcal{L}_k)$ coincides with our test-configurations
$\chi^{(k)}$. 

From this point of view, the main result of \cite{Don11} can be
rephrased as follows. Write $A_{k}$ for the generator of the
$\mathbf{C}^*$-action on the central fiber of the test-configuration
$\chi^{(k)}$, and let $N_{k}$ be the difference between the maximum
and minimum eigenvalues of $A_{k}$.
Then the result in \cite{Don11} is the following
\begin{thm}[Donaldson~\cite{Don11}]\label{thm:Donaldson}
	Suppose that $X$ admits a cscK
	metric in $c_1(L)$, and the automorphism group of $(X,L)$ is finite.
	Assume that central fiber $X_0$ above is reduced, and
	the component $B$ does 
	not lie in a hyperplane in $\mathbf{P}(H^0(X_0,L_0)^*)$. 
	Moreover, suppose that for each
	$k$, the power $\mathcal{I}_B^k$ of the ideal sheaf of $B$ in
	$\mathcal{X}$ coincides with the sheaf of holomorphic functions
	vanishing to order $k$ at the generic point of $B$.
	Then there
	is a constant $C > 0$, such that for all $k$ we have
	\begin{equation}\label{eq:chowlimit}
		\mathrm{Chow}(\chi^{(k)}) \geqslant Ck^{-1}N_{k}.
	\end{equation}
\end{thm}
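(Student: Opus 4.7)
The strategy is to deduce Theorem~\ref{thm:Donaldson} as a corollary of Proposition~\ref{prop:bstab}, using the identification of Donaldson's birationally transformed test-configurations $(\mathcal{X}_k, \mathcal{L}_k)$ with the approximating sequence $\chi^{(k)}$ of the single filtration $\chi$ defined by \eqref{eq:bstab}. That identification was already carried out in Section~\ref{sec:b-stab}, so the quantity $\mathrm{Chow}(\chi^{(k)})$ in \eqref{eq:chowlimit} is literally the Chow weight of the $k^\text{th}$ term in our approximating sequence.

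The first step is to verify that $\Vert\chi\Vert_2 > 0$. The assumption that $B$ is not contained in any hyperplane of $\mathbf{P}(H^0(X_0,L_0)^*)$, combined with the reducedness of $X_0$, forces the filtration $F_i^B R_1$ to be nontrivial in a strong sense: if the pole order along $B$ were the same for all sections in $R_1$, then after normalizing by a constant-weight action every element of $R_1$ would restrict to weight zero on $B$, placing $B$ inside a linear subspace. Therefore at least two different pole orders appear on $R_1$, and this forces the convex transform $G_\chi$ on the Okounkov body to be non-constant, so by Lemma~\ref{lem:norm} we have $\Vert\chi\Vert_2 > 0$.

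The second step is the upper bound $N_k = O(k)$. The hypothesis that $\mathcal{I}_B^k$ coincides with the sheaf of functions vanishing to order $k$ at the generic point of $B$ is precisely what makes $\chi$ behave multiplicatively in a clean way: if $m_0$ is the maximum pole order along $B$ of any $\overline{s}$ for $s\in R_1$, then products give $\overline{s}$ with pole order at most $k m_0$ for every $s \in R_k$, hence $N_k \leqslant k m_0$. With this in hand, Proposition~\ref{prop:bstab} provides constants $C_0 > 0$ and $k_0$ with $\mathrm{Chow}(\chi^{(k)}) \geqslant C_0$ for all $k \geqslant k_0$, and since $k^{-1}N_k \leqslant m_0$, the inequality \eqref{eq:chowlimit} follows for $k\geqslant k_0$ with $C = C_0/m_0$; the finitely many remaining values of $k$ are absorbed by shrinking $C$.

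The main obstacle I expect is the first step: the geometric hypothesis that $B$ is not contained in a hyperplane must be translated into non-constancy of the convex transform of $\chi$, which requires care in relating the Rees algebra of $\chi$ (built out of pole orders along $B$ in the total space $\mathcal{X}$) to the Okounkov body construction (which depends on an unrelated choice of point and coordinates in $X$). Making this step precise is where the reducedness of $X_0$ and the ideal sheaf hypothesis both play a role, since they ensure that the pole-order filtration genuinely computes the weights of the $\mathbf{C}^*$-action on the graded pieces.
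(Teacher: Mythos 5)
Your overall strategy---identify Donaldson's birationally modified test-configurations with the sequence $\chi^{(k)}$ attached to the filtration \eqref{eq:bstab}, and then feed the single filtration $\chi$ into Proposition~\ref{prop:bstab}---is exactly the route the paper takes (this is the content of Proposition~\ref{prop:D2}, which even drops the hypothesis on the powers $\mathcal{I}_B^k$). The genuine gap is in your first step. Knowing that at least two distinct pole orders along $B$ occur among the sections of $R_1$ only tells you that the weights of $\chi^{(1)}$ are non-constant; it does not give $\Vert\chi\Vert_2>0$. The $L^2$ norm is an asymptotic quantity: by \eqref{eq:normchi} it is governed by the $k^{n+2}$-coefficient of $\mathrm{Tr}\bigl[(A_k-\mathrm{Tr}(A_k)/d_k)^2\bigr]$, so positivity requires that an order-$k^n$ portion of the $d_k\sim a_0k^n$ weights deviate from the average by an amount of order $k$. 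Examples~\ref{ex:filt1} and~\ref{ex:trivial} in the paper are precisely filtrations and test-configurations with genuinely non-constant weights whose norm is nevertheless zero, so the inference ``two different pole orders, hence $G_\chi$ non-constant, hence $\Vert\chi\Vert_2>0$'' does not go through; Lemma~\ref{lem:norm} cannot rescue it because non-constancy of the weights at level one says nothing about non-constancy of the convex transform.

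What the paper actually does at this point is quantitative. Since $X_0$ is reduced and $B$ is an $n$-dimensional irreducible component, one splits off a $\mathbf{C}^*$-invariant complement $H^0(B,L_0^k)\subset H^0(X_0,L_0^k)$ of the sections vanishing on $B$, of dimension growing like a positive multiple of $k^n$, on which the weights of $\chi^{(k)}$ agree with those of the original test-configuration. Because $B$ is not contained in a hyperplane, the action on $H^0(B,L_0)$ has two distinct extremal weights $\lambda_{\min}\neq\lambda_{\max}$ with equivariant sections $s_{\min},s_{\max}$; multiplication by $s_{\min}^{2k/3}$ (resp.\ $s_{\max}^{2k/3}$) embeds $H^0(B,L_0^{k/3})$ into the low-weight (resp.\ high-weight) part of $H^0(B,L_0^k)$, forcing order-$k^n$ many weights to lie at distance at least $k(\lambda_{\max}-\lambda_{\min})/3$ from the average on each side. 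This yields $\mathrm{Tr}\bigl[(A_k-\mathrm{Tr}(A_k)/d_k)^2\bigr]\geqslant \tfrac12 c_0k^{n+2}\bigl((\lambda_{\max}-\lambda_{\min})/3\bigr)^2$ and hence $\Vert\chi\Vert_2>0$. Your second step is essentially fine: subadditivity of the pole order at the generic point of $B$ already gives $N_k\leqslant km_0$ (the ideal-sheaf hypothesis is not needed for this, which is why the paper can discard it), and then \eqref{eq:chowlimit} follows for large $k$ from $\mathrm{Chow}_\infty(\chi)>0$; but note that absorbing the finitely many small $k$ by shrinking $C$ tacitly assumes $\mathrm{Chow}(\chi^{(k)})>0$ for those $k$ as well, which is not supplied by the asymptotic statement. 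The missing idea, however, is the variance lower bound above, without which the appeal to Proposition~\ref{prop:bstab} is not justified.
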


It is natural to define a norm $\Vert\chi\Vert_{\infty}$ of
the filtration $\chi$ by
\[ \Vert\chi\Vert_{\infty} = \liminf_{k\to\infty} \frac{1}{k}N_k.\]
Then \eqref{eq:chowlimit} is equivalent to saying that if
$\Vert\chi\Vert_\infty> 0$, then $\mathrm{Chow}_\infty(\chi) > 0$, using
the asymptotic Chow weight we defined in Equation \eqref{eq:asymptchow}.

We will now show that
Proposition~\ref{prop:bstab} implies this theorem, even without the
condition on the powers $\mathcal{I}_B^k$ of the ideal sheaf of $B$.

\begin{prop}\label{prop:D2}
  Suppose that $X$ admits a cscK metric in $c_1(L)$, and the
  automorphic group of $(X,L)$ is finite. Suppose that we have a 
  test-configuration for $X$ with reduced central fiber $X_0$.
  Suppose that $X_0$ contains an irreducible component
  $B$, which is not contained in a hyperplane in $\mathbf{P}(H^0(X_0,
  L_0)^*)$. Construct the filtration $\chi$ as above.
  If $\Vert\chi\Vert_\infty > 0$, then $\mathrm{Chow}_\infty(\chi) > 0$. 
\end{prop}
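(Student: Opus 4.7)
The plan is to deduce Proposition~\ref{prop:D2} from Proposition~\ref{prop:bstab} by showing that in the $b$-stability setup the condition $\Vert\chi\Vert_\infty > 0$ already implies $\Vert\chi\Vert_2 > 0$. The two norms are not equivalent for arbitrary filtrations---Example~\ref{ex:filt1} exhibits $\Vert\chi\Vert_\infty > 0$ with $\Vert\chi\Vert_2 = 0$---so the argument must exploit the fact that $\chi$ is defined through a single integer-valued valuation $v = \mathrm{ord}_B$ on the function field of $\mathcal{X}$.

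I would begin by setting $M_k = \max\{-v(s) : s \in R_k \setminus \{0\}\}$ and $m_k = \min\{-v(s) : s \in R_k \setminus \{0\}\}$. The additivity of $v$ makes $M_k$ super-additive and $m_k$ sub-additive in $k$, so Fekete's lemma yields limits $\alpha = \lim_k M_k/k = \sup_k M_k/k$ and $\beta = \lim_k m_k/k = \inf_k m_k/k$, with $\Vert\chi\Vert_\infty = \alpha - \beta$. The central step is to show that the essential supremum of $G_\chi$ on $P$ is at least $\alpha$ and the essential infimum at most $\beta$. For the supremum, fix $c < \alpha$, pick $k_0$ with $M_{k_0}/k_0 > c$, and let $s_0 \in R_{k_0}$ attain this maximum. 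Setting $k = k_0 \ell$ and taking $j$ slightly larger than $ck/M_{k_0}$ (so that $j < \ell$ and $j M_{k_0} > \lfloor ck \rfloor$), the subspace $s_0^j \cdot R_{k_0(\ell-j)} \subset R_k$ has dimension $\sim a_0 (1 - ck_0/M_{k_0})^n k^n$, which is of leading order in $k$. By additivity of $v$, every nonzero element $s_0^j s$ has $-v(s_0^j s) = j M_{k_0} + (-v(s)) > \lfloor ck \rfloor$, so this subspace meets $F_{\lfloor ck\rfloor}R_k$ only at zero. Consequently $\mathrm{Vol}(P^{\leq c}) < \mathrm{Vol}(P)$, so $\{G_\chi > c\}$ has positive Lebesgue measure; letting $c \to \alpha$ gives the supremum bound, and the infimum bound is analogous.

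With these bounds in hand, $\Vert\chi\Vert_\infty > 0$ forces $\alpha > \beta$. Since $G_\chi \geq \beta$ on all of $P$ (because $P^{\leq t}$ is empty for $t < \beta$) and $G_\chi > (\alpha + \beta)/2$ on a set of positive measure, $G_\chi$ is not almost-everywhere constant. Lemma~\ref{lem:norm} then gives $\Vert\chi\Vert_2 > 0$, and Proposition~\ref{prop:bstab} applies to conclude $\mathrm{Chow}_\infty(\chi) > 0$.

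The main obstacle is establishing the codimension bound $\mathrm{Vol}(P^{\leq c}) < \mathrm{Vol}(P)$ for each $c < \alpha$. The subspace $s_0^j R_{k_0(\ell-j)}$ realizes this bound precisely because multiplicativity of the valuation lets the ``top'' of the filtration grow through powers of $s_0$, producing a subspace of leading-order dimension on which $-v$ is bounded below by $ck$. This is exactly what fails in Example~\ref{ex:filt1}, where the ``top'' sections are confined to a bounded-dimensional piece that cannot contribute at leading order, so the argument depends genuinely on the valuation-theoretic origin of the $b$-stability filtration; a minor technical point is the normalization ensuring $-v(s) \geq 0$ on all positive-degree pieces, needed to cleanly discard the cofactor term in the estimate $-v(s_0^j s) \geq j M_{k_0}$.
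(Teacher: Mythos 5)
Your overall reduction coincides with the paper's: both proofs show that $\Vert\chi\Vert_2>0$ and then quote Proposition~\ref{prop:bstab}. The route to $\Vert\chi\Vert_2>0$ is, however, genuinely different. The paper stays on the central fiber: it uses the hypothesis that $B$ is not contained in a hyperplane of $\mathbf{P}(H^0(X_0,L_0)^*)$ to see that the original $\mathbf{C}^*$-action on $H^0(B,L_0)$ has non-constant weights, observes that on a $\mathbf{C}^*$-invariant complement of the sections vanishing on $B$ the weights of $\chi^{(k)}$ agree with those of the original test-configuration, and then bounds $\mathrm{Tr}\bigl[(A_k-\mathrm{Tr}(A_k)/d_k)^2\bigr]$ from below by exhibiting, via multiplication by $s_{\min}^{2k/3}$ and $s_{\max}^{2k/3}$, blocks of $H^0(B,L_0^k)$ of full order of growth on which the weight is pinned near $\tfrac{k}{3}\lambda_{\max}+\tfrac{2k}{3}\lambda_{\min}$ and its mirror. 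Your argument instead works on the Okounkov body of $X$ with the convex transform, exploiting the additivity of $s\mapsto -\mathrm{ord}_B(\overline{s})$; your subspaces $s_0^jR_{k_0(\ell-j)}$ play exactly the role of the paper's $s_{\min}^{2k/3}\cdot H^0(B,L_0^{k/3})$, so the two arguments are close in spirit, but yours never leaves the filtration side, never uses the hyperplane hypothesis on $B$ (nor, essentially, reducedness of $X_0$), and in effect proves the more general statement that any filtration whose level function is additive under multiplication of sections satisfies $\Vert\chi\Vert_\infty>0\Rightarrow\Vert\chi\Vert_2>0$; your contrast with Example~\ref{ex:filt1}, where additivity fails, is exactly the right diagnosis.

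Three points in your sketch need tightening, all repairable. First, the essential-infimum bound is not literally ``analogous'': for the supremum you discard the cofactor's level using level $\geqslant 0$, whereas for the infimum you must bound the cofactor's level \emph{above} (by $Nk$ with $R_1\subset F_NR$, or by $\alpha k$), and the right subspace is $s_0^{\ell-j}R_{k_0j}$ with $j/\ell$ small, giving $\mathrm{Vol}(\{G_\chi\leqslant c\})>0$ for every $c>\beta$. Second, passing from $\limsup_k k^{-n}\dim F_{\lfloor ck\rfloor}R_k<a_0$ to $\mathrm{Vol}(P^{\leqslant c})<\mathrm{Vol}(P)$ requires knowing that the Okounkov-body volume of $R^{\leqslant c}$ equals the limit of the normalized dimensions; either invoke, as in the proof of Theorem~\ref{thm:main}, that $R^{\leqslant c}$ contains an ample series for $c>\beta$ (Definition~\ref{defn:ample}, \cite{BC09}), or bypass volumes altogether using the weak convergence of spectral measures implicit in \eqref{eq:sumT} and \eqref{eq:limT}. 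Third, your concluding sentence deduces non-constancy of $G_\chi$ from ``$G_\chi\geqslant\beta$ everywhere'' together with ``$G_\chi>(\alpha+\beta)/2$ on a set of positive measure''; this does not rule out $G_\chi\equiv\alpha$. You must instead use the essential-infimum bound you stated, i.e.\ that $\{G_\chi<(\alpha+\beta)/2\}$ also has positive measure, before applying Lemma~\ref{lem:norm}.
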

\begin{proof}
We just need to show that $\Vert\chi\Vert_2 > 0$ in order to apply
Proposition~\ref{prop:bstab}. 
If $\Vert\chi\Vert_\infty > 0$, then the
test-configuration is necessarily non-trivial, and since $B$ is not
contained in any hyperplane the $\mathbf{C}^*$-action on $H^0(B,L_0)$ is
non-trivial (i.e. it does not have constant weights). We can choose a
$\mathbf{C}^*$-invariant complement of the space of sections vanishing
on $B$ inside $H^0(X_0, L_0^k)$. Let us write
\[ H^0(B,L_0^k)\subset H^0(X_0,L_0^k) \]
for this complementary subspace. By the construction,
the weights of the $\mathbf{C}^*$-action of the birationally modified 
test-configuration
$\chi^{(k)}$ on this subspace are the same as the weights of the original
test-configuration. Therefore the norm $\Vert\chi^{(k)}\Vert_2$ is
bounded below by the norm of the $\mathbf{C}^*$-action on $(B,L_0)$
given by the original test-configuration $\chi$.
So we just need to check that this $\mathbf{C}^*$-action
on $(B,L_0)$ has positive norm. Since $\chi$ is non-trivial, the corresponding
$\mathbf{C}^*$-action on $H^0(B,L_0)$ does not have constant weights, so the smallest
weight $\lambda_{min}$ differs from the largest weight
$\lambda_{max}$. Let $s_{min}$ and $s_{max}$ be corresponding
$\mathbf{C}^*$-equivariant sections. For any $k$ divisible by 3 we
have an inclusion
\[ H^0(B, L_0^{k/3}) \hookrightarrow H^0(B, L_0^k), \]
where the map is multiplication by $s_{min}^{2k/3}$. This implies that
in the weight decomposition of $H^0(B,L_0^k)$ there will be at least
$\dim H^0(B, L_0^{k/3})$ sections with weights at most
$\frac{k}{3}\lambda_{max} + \frac{2k}{3}\lambda_{min}$. Writing
$\lambda_k$ for the average weight on $H^0(B,L_0^{k/3})$ we then have
\[ \mathrm{Tr}\left[\left(A_k -
    \frac{\mathrm{Tr}(A_k)}{d_k}\right)^2\right] \geqslant c_0 k^n
\left(\lambda_k - \frac{k}{3}\lambda_{max} -
  \frac{2k}{3}\lambda_{min}\right)_+^2 \]
for some $c_0 > 0$, where we are writing $(x)_+ = \max\{x,0\}$.

In an similar way we can also get
\[
\mathrm{Tr}\left[\left(A_k -
    \frac{\mathrm{Tr}(A_k)}{d_k}\right)^2\right] \geqslant c_0 k^n
\left(\frac{2k}{3}\lambda_{max} +
  \frac{k}{3}\lambda_{min}- \lambda_k\right)_+^2
\]

Since
\[ (\lambda_k - a)_+^2 + (b - \lambda_k)_+^2 \geqslant
\frac{1}{2}(b-a)^2 \]
for any $a < b$, it follows that
\[ \mathrm{Tr}\left[\left(A_k -
    \frac{\mathrm{Tr}(A_k)}{d_k}\right)^2\right] \geqslant \frac{1}{2}
c_0k^{n+2}\left(\frac{\lambda_{max} -
    \lambda_{min}}{3}\right)^2. 
\]
In particular $\Vert\chi\Vert_2 > 0$, so we can apply
Proposition~\ref{prop:bstab}.
\end{proof}

\subsection{Filtrations from arcs}\label{sec:arcs}
In the definition of $b$-stability, in addition to families of
birationally modified test-configurations, one also needs to consider
more general degenerations which Donaldson calls arcs. 

Just like for
test-con\-fi\-gu\-ra\-tions, we first embed
$X$ into a projective space $X\subset \mathbf{P}^N$ using sections of $L^r$ for
some $r$. Then instead of acting by a one-parameter subgroup, we choose a
meromorphic map $g : D \to GL(N+1)$, where $D$ is the disk of radius 2
in $\mathbf{C}$
(by rescaling we could use any disk), 
such that $g$ restricts to
a holomorphic map on $\mathbf{C}^*$, and $g(1)=\mathrm{Id}$. 
Looking at the family $g(t)\cdot X$
for $t\not=0$, and taking the closure across zero in the Hilbert scheme,
we obtain a flat family $\pi:(\mathcal{X},\mathcal{L})\to D$, such
that the fibers away from $0$ are isomorphic to $(X,L^r)$. Conversely
any such family can be seen using a meromorphic map $g:D\to
GL(N+1)$ once it is embedded into a projective space. 

Such degenerations also give rise to filtrations in
a similar way to test-configurations. For simplicity we assume that
$r=1$. Thinking of a section $u\in H^0(L^k)$ as a section of
$\mathcal{L}^k$ over $\pi^{-1}(1)$, we can extend any section $u\in
H^0(L^k)$ to a meromorphic section $\overline{u}$ of $\mathcal{L}^k$
over $\mathcal{X}$. 
We define a filtration of 
$R=\bigoplus_{k\geqslant 0} H^0(L^k)$
by
\begin{equation}\label{eq:filtgt}
  F_iR = \left\{ u\in R\,:\, \begin{aligned}
      &\text{there exists a holomorphic
        family of sections }\\
      &v(t)\in R \text{ such that }\\ &t^i (\overline{u}
+ t\overline{v(t)})\text{ is holomorphic on
}\mathcal{X} \end{aligned}\right\},
\end{equation}

\noindent 
where to ensure that $F_0R=\mathbf{C}$, we may need to multiply $g(t)$ 
by a power of $t$. Note that if $t^i(\overline{u_1} +
t\overline{v_1(t)})$ and $t^j(\overline{u_2} + t\overline{v_2(t)})$
  are holomorphic, then so is their product
  \[ t^{i+j}( \overline{u_1u_2} + t\overline{u_1v_2(t) + u_2v_1(t) +
    tv_1(t)v_2(t)}), \]
so $u_1u_2\in F_{i+j}R$, and we get a filtration. 

This filtration $\chi$ gives rise to 
a sequence of test-configurations $\chi^{(k)}$ as usual.
More concretely, for each $k$, our arc induces a meromorphic family of
linear maps on $R_k = H^0(X,L^k)$, which we can think of as a
meromorphic
family of matrices $g_k(t)$, invertible for $t\not=0$. As 
explained in
\cite[Proposition 2]{Don10}, this family can be factored in the form
\begin{equation}\label{eq:factor}
  g_k(t) = L_k(t) t^{A_k} R_k(t),
\end{equation}
where $A_k$ is a diagonal matrix with entries $t^{\lambda_0},
t^{\lambda_1},\ldots, t^{\lambda_{N_k}}$, and $L_k(t), R_k(t)$ are
holomorphic and invertible for all $t$. We can then define a flag in
$R_k$, by letting $x\in F_i'R_k$ if $t^{A_k}$ acts on $R_k(0)x$ with
weights at least $-i$.

\begin{lem} The filtrations $F_i'$ and $F_i$ on $R_k$ defined using the factorization
  \eqref{eq:factor} and by \eqref{eq:filtgt} respectively coincide. 
\end{lem}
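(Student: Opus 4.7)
The plan is to identify both $F_iR_k$ and $F_i'R_k$ with the same explicit condition on the $R_k$-valued meromorphic function $s^i g_k(s)(u+sv(s))$, by using the factorisation $g_k(s)=L_k(s)s^{A_k}R_k(s)$. The first step is to translate the geometric condition ``$s^i(\overline u + s\overline{v(s)})$ is holomorphic on $\mathcal X$'' into an algebraic condition on $R_k$-valued functions of $s$.

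To do this, I would choose polynomials $P_0,\ldots,P_{N_k}\in H^0(\mathbf{P}^N,\mathcal{O}(k))$ whose restrictions to $X$ form a basis $\{p_j\}$ of $R_k$ and whose restrictions to the central fibre $X_0$ form a basis of $H^0(X_0,\mathcal{L}_0^k)$; for $k$ large enough such a choice exists, and for small $k$ one first passes to a Veronesi re-embedding so that $\pi_*\mathcal{L}^k$ is locally free of the expected rank and the restriction map $\mathrm{Sym}^k V^*\to H^0(X_0,\mathcal{L}_0^k)$ is surjective. The sections $\tilde P_j$ obtained by restricting each $P_j$ to $\mathcal{X}\subset\mathbf{P}^N\times D$ then give a holomorphic local frame of $\pi_*\mathcal{L}^k$ near $0$. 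A direct computation gives $\psi_s^*(\tilde P_j|_{\pi^{-1}(s)})=P_j\circ g(s)|_X=g_k(s)^{-1}p_j$, so the matrix relating this frame to the $\psi_s^*$-trivialisation of $\pi_*\mathcal{L}^k$ over $\mathbf{C}^*$ is exactly $g_k(s)^{-1}$. Since the $\psi_s^*$-image of $s^i(\overline u + s\overline{v(s)})$ on $\pi^{-1}(s)$ is $s^i(u+sv(s))$, its coordinates in the frame $\{\tilde P_j\}$ are $s^i g_k(s)(u+sv(s))$, and this section extends holomorphically across $s=0$ precisely when the $R_k$-valued function $s^i g_k(s)(u+sv(s))$ is holomorphic. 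Thus $u\in F_iR_k$ iff there exists $v(s)\in R_k[[s]]$ with $s^i g_k(s)(u+sv(s))$ holomorphic in $s$.

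Next I would apply the factorisation. Since $L_k(s)$ is holomorphic and invertible, the condition becomes $s^{A_k+iI}R_k(s)(u+sv(s))$ holomorphic. Setting $w(s):=R_k(s)(u+sv(s))$, the invertibility of $R_k(s)$ shows that as $v$ ranges over $R_k[[s]]$ the function $w$ ranges over all holomorphic $R_k$-valued functions with $w(0)=R_k(0)u$. Decomposing $w(s)=\sum_j w_j(s)e_j$ in an eigenbasis of $A_k$ with weights $\lambda_j$, the holomorphicity of $s^{A_k+iI}w(s)$ decouples into $s^{\lambda_j+i}w_j(s)\in\mathcal{O}_{D,0}$ for each $j$. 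This is automatic for $\lambda_j\geq -i$; for $\lambda_j<-i$ it requires $w_j$ to vanish to order at least $-\lambda_j-i$ at $s=0$. Because the higher Taylor coefficients of $w_j$ are unconstrained by the initial condition, the only binding constraint is $w_j(0)=0$ whenever $\lambda_j<-i$, which says exactly that $R_k(0)u$ has no eigenvector components of weight less than $-i$, i.e.\ $u\in F_i'R_k$. Hence $F_iR_k=F_i'R_k$.

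The main obstacle is the first step, linking ``holomorphic on $\mathcal X$'' to a concrete $R_k$-valued holomorphicity condition. This rests on producing a polynomial frame of $\pi_*\mathcal L^k$ near $0$ and computing the change-of-trivialisation matrix as $g_k(s)^{-1}$, which in turn uses surjectivity of the restriction $\mathrm{Sym}^k V^*\to H^0(X_0,\mathcal{L}_0^k)$ and local freeness of $\pi_*\mathcal L^k$; both hold after a possible Veronesi re-embedding.
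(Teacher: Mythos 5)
Your proof is correct and follows essentially the same route as the paper: both reduce the geometric condition to holomorphicity of $t^i g_k(t)(u+tv(t))$, then use the factorization $g_k=L_k t^{A_k}R_k$ to see that only the weight decomposition of $R_k(0)u$ matters, since the higher-order terms in $v$ can absorb everything except a nonzero component in a weight space below $-i$. The only difference is that you supply a careful frame-of-$\pi_*\mathcal{L}^k$ justification for the identification of $\overline{u}$ with $g_k(t)u$, a step the paper simply asserts, and you package the two implications into a single equivalence via the substitution $w(t)=R_k(t)(u+tv(t))$, which is a slightly cleaner bookkeeping of the same argument.
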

\begin{proof}
  We will do this for $k=1$, and we will drop the $k$ subscript. 
  For any $u\in R_1$, the extension $\overline{u}$ is just given by
  $g(t)u$. If $t^A$ acts on $R(0)u$ with weights at least $-i$, then
  $t^i L(t) t^A R(0)u$ is holomorphic, which means that
  \[ t^i g(t) R(t)^{-1}R(0)u \]
  is holomorphic. But $R(t) = R(0) + tS(t)$ for some holomorphic
  family of matrices $S(t)$, so
  \[ R(t)^{-1}R(0) u = u - tR(t)^{-1}S(t) u. \]
  Letting $v(t) = R(t)^{-1}S(t)u$ we see that $u\in F_iR_1$.

  Conversely suppose that we have $v(t)$ such that $t^i g(t)(u +
  tv(t))$ is holomorphic. Since
  \[ t^i g(t) (u + tv(t)) = t^i L(t)t^A R(0) u + t^{i+1} L(t) t^A
  \tilde{v}(t) \]
  for some $\tilde{v}(t)$, we see that $R(0) u$ cannot have a non-zero
  component in a weight space less than $-i$, since the resulting
  singularity cannot be cancelled using the other term. Therefore
  $u\in F_i'R_1$. 
\end{proof}

Given an arc, an extension of the Chow weight is defined
in~\cite{Don10}, which coincides with the usual Chow weight if the arc
is actually a test-configuration. We will see that this can be computed
from the filtration $\chi$. Let us take $r=1$ again for simplicity. 
We think of the degeneration as a
map $f:D\to \mathrm{Hilb}$, and pull back the Chow line bundle
$L_{\mathrm{Chow}}$ to $D$. Picking any element $x$ in the fiber over
$1$, we can use the map $g(t)$ to define a meromorphic section of
$L_{\mathrm{Chow}}$ over $D$, which is holomorphic away from the origin. 
If this section has a pole of order $-w$, then the Chow weight is
essentially $w$, once we normalize so that
each $g(t)$ is in $SL(N+1)$. To compute this, we just need
to know that according to Knudsen-Mumford~\cite{KM76}, the Chow line
bundle is the leading term $\lambda_{n+1}$ of the expansion
\begin{equation}\label{eq:KM}
	\det \pi_*(\mathcal{L}^k) = \lambda_{n+1}^{\binom{k}{n+1}}
	\otimes\ldots\otimes 
	\lambda_0,
\end{equation}
for large $k$, where $\lambda_i$ are certain natural $\mathbf{Q}$-line
bundles on the base of the family
$\pi:(\mathcal{X},\mathcal{L})\to D$ (in fact they are pulled back from
the Hilbert scheme, under the map $f$). 

In terms of the matrices $g_k(t)$ above, we are interested in the
asymptotics as $k\to\infty$ of the
order of the pole of $\det g_k(t)$ at $t=0$, where $g_1(t)$ is
normalized to be in $SL(N+1)$. From the factorization
\eqref{eq:factor} it is clear that the order of the pole is
$-\lambda_0 -\ldots -\lambda_N = -\mathrm{Tr}(A_k)$. The Chow weight
is then given up to a positive multiple by the asymptotic formula
\[ b_0 = \lim_{k\to\infty} k^{-(n+1)}\mathrm{Tr}(A_k). \]
If $g_1(t)$ were not normalized to be in $SL(N+1)$, then we could
compensate for this to get the general formula

\[ \widetilde{\mathrm{Chow}}_1(\chi) = \frac{b_0}{a_0} - \frac{w_1}{N+1},\]
where $a_0$ is the volume of $(X,L)$ as usual. This is analogous to the
formula we had in the case of a test-configuration, in
Equation~\ref{eq:Chow}. The subscript 1 means that the original
test-configuration had exponent 1 (in general the formula changes just
like for the usual Chow weight in Equation \eqref{eq:Chow}). 
In addition we put a tilde on top to distinguish this
Chow weight from the Chow weights $\mathrm{Chow}_k(\chi)$ of the
filtration in Definition~\ref{defn:filtFut}. 

In
general these two Chow weights are not equal, and in fact for each $k$
we have
\begin{equation}\label{eq:chowineq}
	\widetilde{\mathrm{Chow}}_k(\chi) \geqslant \mathrm{Chow}_k(\chi). 
\end{equation}
\noindent This is very similar to what we used in
Lemma~\ref{lem:Chowunstable}.
Indeed, focusing on the case when $k=1$, recall that
$\mathrm{Chow}_1(\chi)$ is the Chow weight of the test-configuration
induced by the filtration on $R_1$. As in Lemma~\ref{lem:filtapprox},
let us write $\chi^{(1)}$ for the corresponding finitely-generated
filtration. If we write $G_\chi$ and $G^{(1)}_\chi$ for the convex
transforms of $\chi$ and $\chi^{(1)}$ (corresponding to a fixed Okounkov
body), then the two Chow weights are
given by
\[\begin{aligned}
	\mathrm{Chow}_1(\chi) = -\overline{G}^{(1)}_\chi -
	\frac{w_1}{N+1}, \\
	\widetilde{\mathrm{Chow}}_1(\chi) = -\overline{G}_\chi -
	\frac{w_1}{N+1},
\end{aligned}\]
where we used the relations \eqref{eq:a0b0} for both $\chi$ and
$\chi^{(1)}$. 
From Lemma~\ref{lem:filtapprox} we know that $G^{(1)}_\chi \geqslant
G_\chi$, so the inequality \eqref{eq:chowineq} on the Chow weights
follows. It should not be surprising that we get a smaller Chow weight by
looking at the corresponding test-configuration, since by the
Hilbert-Mumford criterion we know that in
testing for Chow stability, it is enough to look at test-configurations
and we do not need general arcs. 

Let us now combine arcs with the construction from the previous section,
so let us suppose that we have a distinguished 
component $B$ in the central fiber of our arc $\mathcal{X}$. 
Just as in the case of
test-configurations, Donaldson constructs a sequence of arcs
$\mathcal{X}_i$. At the same time, we can also obtain a filtration
$\chi$ just like in Equation \eqref{eq:bstab}, by letting
\[
F_i^BR_{k} = \left\{ u\in R_{k}\,:\,\begin{aligned}
    &t^i(\overline{u}+t\overline{v(t)})
\text{ has no pole at } \\ &\text{the
  generic point of }B\text{ for some }v(t)\end{aligned}
\right\},
\]

Now the
sequence of test-configurations $\chi^{(i)}$ induced by $\chi$ are
certainly not the same as the arcs $\mathcal{X}_i$.
Instead for each $i$, the test-configuration $\chi^{(i)}$ is
simply the test-configuration given by the filtration on $H^0(X,L^i)$
which is induced by the arc $\mathcal{X}_i$. It follows then in the
same way as above, that the
Chow weight of the arc $\mathcal{X}_i$ is bounded from below by the Chow
weight $\mathrm{Chow}_i(\chi)$ of the test-configuration $\chi^{(i)}$.
In other words
\[ \liminf_{i\to\infty} \widetilde{\mathrm{Chow}}_i
(\mathcal{X}_i)\geqslant
\mathrm{Chow}_\infty(\chi),\]
in terms of the asymptotic Chow weight of the filtration. 

The conclusion from all this is that
Proposition~\ref{prop:bstab} can be used to obtain a result
analogous to Proposition~\ref{prop:D2} for arcs instead of
just test-configurations. 

\subsection{Webs of descendants}

The full definition of $b$-stability in \cite{Don10} 
focuses more on the possible
central fibers rather than the degenerations themselves.  This leads to
extra complications, since a given
scheme could be the central fiber of several different degenerations. It
is not clear whether filtrations are versatile enough to encode this
richer data of what Donaldson calls a ``web of descendants'', so we leave
a more detailed examination of this to future studies.

\newcommand{\A}{{\mathbb{A}}}
\newcommand{\C}{{\mathbb{C}}}
\newcommand{\N}{{\mathbb{N}}}
\newcommand{\PP}{{\mathbb{P}}}
\newcommand{\Q}{{\mathbb{Q}}}
\newcommand{\R}{{\mathbb{R}}}
\newcommand{\T}{{\mathbb{T}}}
\newcommand{\Z}{{\mathbb{Z}}}

\newcommand{\fa}{{\mathfrak{a}}}
\newcommand{\fb}{{\mathfrak{b}}}
\newcommand{\fp}{{\mathfrak{p}}}

\newcommand{\cA}{{\mathcal{A}}}
\newcommand{\cB}{{\mathcal{B}}}
\newcommand{\cC}{{\mathcal{C}}}
\newcommand{\cD}{{\mathcal{D}}}
\newcommand{\cE}{{\mathcal{E}}}
\newcommand{\cF}{{\mathcal{F}}}
\newcommand{\cH}{{\mathcal{H}}}
\newcommand{\cI}{{\mathcal{I}}}
\newcommand{\cJ}{{\mathcal{J}}}
\newcommand{\cL}{{\mathcal{L}}}
\newcommand{\cM}{{\mathcal{M}}}
\newcommand{\cN}{{\mathcal{N}}}
\newcommand{\cO}{{\mathcal{O}}}
\newcommand{\cS}{{\mathcal{S}}}
\newcommand{\cT}{{\mathcal{T}}}
\newcommand{\cV}{{\mathcal{V}}}
\newcommand{\cX}{{\mathcal{X}}}
\newcommand{\cW}{{\mathcal{W}}}

\newcommand{\ie}{i.e.~}

\newcommand{\psh}{{\mathrm{PSH}}}

\newcommand{\fA}{{\mathfrak{A}}}
\newcommand{\fB}{{\mathfrak{B}}}
\newcommand{\fC}{{\mathfrak{C}}}
\newcommand{\fD}{{\mathfrak{D}}}
\newcommand{\fE}{{\mathfrak{E}}}
\newcommand{\fF}{{\mathfrak{F}}}
\newcommand{\fH}{{\mathfrak{H}}}
\newcommand{\fI}{{\mathfrak{I}}}
\newcommand{\fJ}{{\mathfrak{J}}}
\newcommand{\fN}{{\mathfrak{N}}}
\newcommand{\fO}{{\mathfrak{O}}}
\newcommand{\fP}{{\mathfrak{P}}}
\newcommand{\fS}{{\mathfrak{S}}}
\newcommand{\fT}{{\mathfrak{T}}}
\newcommand{\fV}{{\mathfrak{V}}}
\newcommand{\fX}{{\mathfrak{X}}}
\newcommand{\fW}{{\mathfrak{W}}}
\newcommand{\fM}{{\mathfrak{M}}}
\newcommand{\fY}{{\mathfrak{Y}}}
\newcommand{\cK}{{\mathcal{K}}}

\newcommand{\tf}{\widetilde{\varphi}}
\newcommand{\tom}{\widetilde{\omega}}
\newcommand{\tm}{\widetilde{m}}
\newcommand{\tX}{\widetilde{X}}
\newcommand{\tV}{\widetilde{V}}

\renewcommand{\a}{\alpha}
\renewcommand{\b}{\beta}
\newcommand{\de}{\delta}
\newcommand{\e}{\varepsilon}
\newcommand{\om}{\omega}
\newcommand{\f}{\varphi}
\newcommand{\fm}{\varphi_{\min}}
\newcommand{\p}{\psi}
\newcommand{\la}{\lambda}
\newcommand{\s}{\sigma}
\newcommand{\D}{\Delta}

\newcommand{\FS}{\mathrm{f}}
\newcommand{\MA}{\mathrm{MA}}
\newcommand{\Amp}{\mathrm{Amp}\,}
\newcommand{\Hilb}{\mathrm{h}}
\newcommand{\Ric}{\mathrm{Ric}}
\newcommand{\scal}{\mathrm{Scal}}

\newcommand{\reg}{\mathrm{reg}}

\newcommand{\ddt}{\frac{d}{dt}}
\newcommand{\pddt}{\frac{\partial}{\partial t}}

\newcommand{\mes}{\mathrm{m}}
\newcommand{\tmes}{\widetilde{\mathrm{m}}}

\newcommand{\din}{\operatorname{Din}}
\newcommand{\mab}{\operatorname{Mab}}

\newcommand{\dbar}{\overline{\partial}}

\newcommand{\lc}{\operatorname{lc}}
\newcommand{\hi}{\operatorname{Hilb}}
\newcommand{\Hom}{\operatorname{Hom}}
\newcommand{\NS}{\operatorname{NS}}
\newcommand{\Aut}{\operatorname{Aut}}
\newcommand{\val}{\operatorname{val}}
\newcommand{\ord}{\operatorname{ord}}
\newcommand{\dist}{\operatorname{dist}}
\newcommand{\id}{\operatorname{id}}
\newcommand{\sing}{\operatorname{sing}}
\newcommand{\vol}{\operatorname{vol}}
\newcommand{\tr}{\operatorname{tr}}
\newcommand{\covol}{\operatorname{covol}}
\newcommand{\grad}{\operatorname{grad}}
\newcommand{\conv}{\operatorname{Conv}}
\newcommand{\res}{\operatorname{Res}}
\newcommand{\Spec}{\operatorname{Spec}}
\newcommand{\supp}{\operatorname{supp}}
\newcommand{\codim}{\operatorname{codim}}
\newcommand{\nef}{\operatorname{nef}}
\newcommand{\ca}{\operatorname{Cap}}

\newcommand{\eq}{{\mu_\mathrm{eq}}}
\newcommand{\eneq}{{E_\mathrm{eq}}}
\newcommand{\ena}{{\cE^\A_\mathrm{eq}}}
\newcommand{\ela}{{\cL^\A_k}}
\newcommand{\eler}{{\cL^\R_k}}

\section{Appendix: asymptotic vanishing orders of graded linear
  series -- S. Boucksom}
\subsection{Iitaka dimension and multiplicity}
Let $X$ be a projective variety over an algebraically closed field $k$
(of any characteristic), set $n:=\dim X$, and let $L$ be a line bundle
on $X$. Denote by  
$$
R=R(X,L):=\bigoplus_{m\in\N} H^0(X,mL)
$$ 
the algebra of sections of $L$. Given a graded subalgebra $S$ of $R$
(aka \emph{graded linear series} of $L$), set  
$$
\N(S):=\{m\in\N\mid S_m\ne 0\},
$$
which is a sub-semigroup of of $\N$, hence coincides outside a finite
set with the multiples of the gcd $m(S)\in\N$ of $\N(S)$, sometimes
known as the \emph{exponent} of $S$. Define also the \emph{Iitaka
  dimension} of $S$ as $\kappa(S):=\mathrm{tr. deg}(S/k)-1$ if $S\ne
k$, and $\kappa(S):=-\infty$ otherwise, so that
$\kappa(S)\in\{-\infty,0,1,...,n\}$.

In this generality, the following result is due to Kaveh and
Khovanskii \cite{KK} (see also \cite{Bou}).

\begin{thm}\label{thm:KK} Let $S\ne k$ be a graded subalgebra of
  $R(X,L)$, and write $\kappa=\kappa(S)$. \begin{itemize}  
\item[(i)] The \emph{multiplicity}
$$
e(S)=\lim_{m\in\N(S),\,m\to\infty}\frac{\kappa !}{m^\kappa}\dim S_m
$$
exists in $]0,+\infty[$. 
\item[(ii)] For each $m\in\N(S)$, let
  $\Phi_m:X\dashrightarrow\PP(S_m^*)$ be the rational map defined by
  linear series $S_m$, and denote by $Y_m$ its image. Then we have
  $\dim Y_m=\kappa$ for all $m\in\N(S)$ large enough, and  
$$
e(S)=\lim_{m\in\N(S),\,m\to\infty}\frac{\deg Y_m}{m^\kappa}.
$$
\end{itemize}
\end{thm}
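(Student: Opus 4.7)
The plan is to prove Theorem~\ref{thm:KK} via the Newton--Okounkov body machinery, reducing the geometric statements to a counting problem for a semigroup in $\Z^{n+1}$.

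First, I would fix an admissible flag on $X$ (equivalently, a smooth point $p\in X$ together with local regular parameters), so as to obtain a rank-$n$ valuation $\nu\colon R\setminus\{0\}\to\Z^n$ refining the grading. One checks that $\nu$ is injective on each graded piece $R_m$, which is the standard property making $\dim R_m = \#\nu(R_m)$. Restricting to $S$, form the graded semigroup
\[ \Gamma(S) := \{(m,\nu(s))\mid 0\ne s\in S_m\}\subset \N\times\Z^n, \]
its closed convex cone $\Sigma(S)\subset\R\times\R^n$, and the slice $\Delta(S):=\Sigma(S)\cap(\{1\}\times\R^n)$, which will be our Okounkov body. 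Since $\nu$ is injective on graded pieces, counting lattice points in the $m$-th horizontal slice of $\Gamma(S)$ computes $\dim S_m$ exactly.

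The crucial algebraic step is to identify $\dim\Delta(S)$ with $\kappa(S)$. Let $q:=\dim\Delta(S)$. On the one hand, general semigroup theory (Kaveh--Khovanskii) gives $\#(\Gamma(S)\cap(\{m\}\times\Z^n))\asymp m^q$, so that $q\leqslant\kappa(S)$ by definition of the Iitaka dimension. For the reverse, one uses that the degree-$0$ part of the localization $\mathrm{Frac}(S)_0$ is isomorphic to a subfield of the fraction field of the affine semigroup algebra $k[\Gamma(S)\cap(\Z\cdot\Gamma(S))_0]$, which has transcendence degree exactly $q$ over $k$, so $\kappa(S)=\mathrm{tr.deg}_k\mathrm{Frac}(S)_0\leqslant q$. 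This identification is really the heart of the matter, and is where I expect the main obstacle to lie, because $\Gamma(S)$ need not be finitely generated and one must handle the discrepancy between the affine lattice generated by $\Gamma(S)$ and $\Z^{n+1}$ carefully; the device for this is to approximate $\Gamma(S)$ from inside by finitely generated subsemigroups $\Gamma_i$ with $\Delta(\Gamma_i)\to\Delta(S)$ in measure, which is precisely the Kaveh--Khovanskii ``strong'' semigroup axioms or Boucksom's ample-series formalism.

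With $\dim\Delta(S)=\kappa$ in hand, part (i) follows from the Kaveh--Khovanskii lattice-point asymptotic
\[ \#(\Gamma(S)\cap(\{m\}\times\Z^n)) = \frac{\mathrm{Vol}_\kappa(\Delta(S))}{\covol(\Lambda_S)}\,m^\kappa + o(m^\kappa) \]
as $m\to\infty$ in $\N(S)$, where $\Lambda_S$ is the sublattice of $\Z^n$ spanned by differences $\nu(s)-\nu(s')$ with $s,s'\in S_m$ ($m\in\N(S)$). Thus
\[ e(S)=\frac{\kappa!\,\mathrm{Vol}_\kappa(\Delta(S))}{\covol(\Lambda_S)}\in(0,\infty). \]

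For part (ii), denote by $A_m\subset S$ the graded subalgebra generated by $S_m$ (with grading divided by $m$). Then $A_m$ is precisely the affine coordinate algebra of the cone over $Y_m\subset\PP(S_m^*)$, so Hilbert's theorem gives
\[ \dim (A_m)_{km} = \frac{\deg Y_m}{\kappa_m!}\,k^{\kappa_m} + O(k^{\kappa_m-1}) \]
where $\kappa_m=\dim Y_m$. Applying part (i) to $A_m$ yields $e(A_m)=\deg Y_m/m^{\kappa_m}$. The stabilization $\kappa_m=\kappa$ for $m\gg 0$ is a standard Iitaka-type statement, essentially because for large $m\in\N(S)$ the field $k(Y_m)$ is cofinal in $\mathrm{Frac}(S)_0$. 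Finally, as $m\to\infty$ the Okounkov bodies $\Delta(A_m)$ exhaust $\Delta(S)$ (the $A_m$'s form a cofinal family among finitely generated graded subalgebras of $S$), and $\Lambda_{A_m}\subset\Lambda_S$ becomes of finite index tending to $1$, so $e(A_m)\to e(S)$ by dominated convergence, giving the claim.
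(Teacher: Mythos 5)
The paper offers no proof of Theorem~\ref{thm:KK}: it is quoted verbatim from Kaveh--Khovanskii \cite{KK} (see also \cite{Bou}, \cite{LM09}), so there is no in-paper argument to compare yours against. Your outline does follow the route of the cited source: a rank-$n$ valuation $\nu$ with one-dimensional leaves, the graded value semigroup $\Gamma(S)$ and its Okounkov body $\Delta(S)$, lattice-point asymptotics for the slices $\Gamma(S)\cap(\{m\}\times\mathbf{Z}^n)$ for part (i), and for part (ii) the identification of the subalgebra $A_m$ generated by $S_m$ with the homogeneous coordinate ring of $Y_m$ together with the Fujita-type exhaustion $e(A_m)\to e(S)$. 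That global structure is correct.

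The one step that does not go through as written is the identification $\dim\Delta(S)=\kappa(S)$, which you rightly call the heart of the matter but then justify incorrectly in both directions. From $\#\bigl(\Gamma(S)\cap(\{m\}\times\mathbf{Z}^n)\bigr)\asymp m^q$ you cannot conclude $q\leqslant\kappa(S)$ ``by definition of the Iitaka dimension'': in this paper $\kappa(S)$ is \emph{defined} as $\mathrm{tr.deg}(S/k)-1$, not as the growth exponent of $\dim S_m$, so this inference is a non sequitur. Likewise, $\mathrm{Frac}(S)_0$ is not in any natural way a subfield of the fraction field of the semigroup algebra of $\Gamma(S)$ (the valuation $\nu$ is not a ring homomorphism), so the reverse inequality is not established either. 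The actual arguments are: for $q\leqslant\kappa$, homogeneous sections $s_0,\dots,s_q$ whose values $(\deg s_i,\nu(s_i))$ are $\mathbf{Q}$-linearly independent are algebraically independent over $k$ (in a putative polynomial relation the monomial of minimal value is unique, so the sum cannot vanish), whence $\mathrm{rk}\,G(\Gamma(S))=q+1\leqslant\mathrm{tr.deg}(S)$; for $\kappa\leqslant q$, choose a homogeneous transcendence basis of $\mathrm{Frac}(S)$ inside $S$, raise its elements to a common degree $d$, and compare the $\asymp m^{\kappa}$ linearly independent monomials this produces in $S_{md}$ with the upper bound $\dim S_{md}=\#\Gamma_{md}=O(m^{q})$ coming from the semigroup lying in a rank-$(q+1)$ subgroup with bounded slices. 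With this step replaced by the correct arguments (as in \cite{KK}), and with the minor caveat that $\nu$ is of course not injective on $R_m$ but merely satisfies $\#\nu(R_m\setminus\{0\})=\dim R_m$, your outline is a faithful reconstruction of the cited proof.
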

Note that $L$ is big iff $\kappa(X,L):=\kappa(R)$ is equal to $n:=\dim
X$, and we then have $e(R)=\vol(L)$, the \emph{volume} of $L$.  

\begin{defn}\label{defn:ample}
  We say that $S$ \emph{contains an ample series} if 
\begin{itemize}
\item[(i)] $S_m\neq 0$ for all $m\gg 1$, \ie $S$ has exponent $m(S)=1$. 
\item[(ii)] There exists a decomposition $L=A+E$ into $\Q$-divisors
  with $A$ ample and $E$ effective such that $H^0(X,mA)\subset
  S_m\subset H^0(X,mL)$ for all sufficiently divisible $m\in\N$.  
\end{itemize}
\end{defn}
This condition immediately implies that the rational map
$\Phi_m:X\dashrightarrow\PP(S_m^*)$ defined by $S_m$ in birational
onto its image $Y_m$ for all $m\gg 1$.  

Assuming this, let $\fb_m\subset\cO_X$ be the \emph{base-ideal} of
$S_m$, \ie the image of the evaluation map
$S_m\otimes\cO_X(-mL)\to\cO_X$. Let $\mu_m:X_m\to X$ be any birational
morphism with $X_m$ normal and projective and such that
$\fb_m\cdot\cO_{X_m}$ is locally principal, hence of the form
$\cO_{X_m}(-F_m)$ for an effective Cartier divisor $F_m$ on $X_m$. We
then set 
\begin{equation}\label{equ:pm}
P_m:=\mu_m^*L-\tfrac 1 m F_m,
\end{equation}
which is a nef $\Q$-Cartier divisor on $X_m$. If $m$ divides $l$, then
we may choose $X_{l}$ to dominate $X_m$, and we have $P_{l}\ge P_m$
after pulling back to $X_l$ (in the sense that the difference is an
effective $\Q$-divisor). Note also that the intersection number
$(P_m^n)$ does not depend on the choice of $X_m$ by the projection
formula, and that $(P_{l}^n)\ge(P_m^n)$ when $m$ divides $l$, since
$P_m$ and $P_{l}$ are nef with $P_{l}\ge P_m$.  

As a consequence of Theorem \ref{thm:KK} above (see also \cite[Theorem
C]{Jow}), we get the following version of the Fujita approximation
theorem:  
\begin{cor}\label{cor:mult} Let $S$ be a graded subalgebra of $R$, and
  assume that $S$ contains an ample series. Then
  $e(S)=\lim_{m\to\infty}(P_m^n)$. 
\end{cor}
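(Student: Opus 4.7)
The plan is to identify $(P_m^n)$ with $m^{-n}\deg Y_m$ for all sufficiently large $m$, so that the conclusion is immediate from Theorem~\ref{thm:KK}(ii).

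First, I would set up the relevant morphism on the modification $\mu_m:X_m\to X$. By construction of the base-ideal $\fb_m$, the evaluation map $S_m\otimes\cO_X(-mL)\to\cO_X$ pulls back to a surjection $\mu_m^*S_m\otimes\cO_{X_m}(-m\mu_m^*L)\twoheadrightarrow\cO_{X_m}(-F_m)$, i.e.\ the pulled-back sections $\mu_m^*S_m$, viewed inside $H^0(X_m,mP_m)$ via $mP_m=m\mu_m^*L-F_m$, are base-point free. They therefore define a morphism
\[
\tilde{\Phi}_m:X_m\longrightarrow\PP(S_m^*),
\]
whose image is precisely $Y_m$ and which satisfies $\tilde{\Phi}_m^*\cO_{\PP(S_m^*)}(1)\cong\cO_{X_m}(mP_m)$. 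In particular $P_m$ is nef, consistent with what was asserted.

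Next, the hypothesis that $S$ contains an ample series forces $m(S)=1$ and implies, as already noted just after Definition~\ref{defn:ample}, that the rational map $\Phi_m:X\dashrightarrow Y_m$ is birational for all $m\gg 1$; since $\mu_m$ is birational, the same holds for $\tilde{\Phi}_m$. The projection formula then gives, for all such $m$,
\[
m^n(P_m^n)=(mP_m)^n=(\tilde{\Phi}_m^*H)^n=\deg(\tilde{\Phi}_m)\cdot\deg Y_m=\deg Y_m,
\]
where $H$ is the hyperplane class on $\PP(S_m^*)$.

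Dividing by $m^n$ and invoking Theorem~\ref{thm:KK}(ii), which asserts $\lim_{m\to\infty}m^{-n}\deg Y_m=e(S)$ (the limit being genuine since $\N(S)$ contains all sufficiently large integers), we conclude $\lim_{m\to\infty}(P_m^n)=e(S)$. The only delicate point is the identification $\tilde{\Phi}_m^*\cO(1)=\cO_{X_m}(mP_m)$ as line bundles (not merely as sub-linear series), which is exactly what the definition of $F_m$ as the total base divisor of $\mu_m^*|S_m|$ is designed to ensure; beyond this, the argument is formal.
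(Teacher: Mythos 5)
Your proposal is correct and follows essentially the same route as the paper: lift $\Phi_m$ to a morphism $f_m:X_m\to\PP(S_m^*)$ with $f_m^*\cO(1)=mP_m$, use birationality onto $Y_m$ to get $(P_m^n)=m^{-n}\deg Y_m$, and conclude by Theorem~\ref{thm:KK}(ii). The extra care you take with the surjectivity of the pulled-back evaluation map and the identification of line bundles is exactly the content the paper leaves implicit.
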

\begin{proof} With the notation of Theorem \ref{thm:KK}, the rational
  map $\Phi_m$ lifts to a morphism $f_m:X_m\to\PP(S_m^*)$ which is
  birational onto its image $Y_m$ and such that
  $f_m^*\cO(1)=\mu_m^*(mL)-F_m=mP_m$. We thus see that 
$$
(P_m^n)=\frac{\deg Y_m}{m^n},
$$ 
and the result follows from (ii) in Theorem \ref{thm:KK}.
\end{proof}

\begin{rem} The special case of Theorem \ref{thm:KK} where $S$
  contains an ample series, which is what is being used in the
  previous corollary, was first established in \cite{LM09}. 
\end{rem}

\subsection{Asymptotic vanishing orders and multiplicites}
Our goal is to prove the following result. 

\begin{thm}\label{thm:sz} Let $X$ be a smooth projective variety over
  an algebraically closed field $k$, and let $L$ be a line bundle on
  $X$. Let $S$ be a graded subalgebra of $R=R(X,L)$, and assume that
  $S$ contains an ample series. Assume also that
  $e(S)<e(R)=\vol(L)$. Then there exists $\e>0$ and a (closed) point
  $x\in X$ with maximal ideal $\mathfrak{m}_x\subset\cO_{X,x}$ such
  that $S_m\subset H^0\left(X,mL\otimes\mathfrak{m}_x^{\lfloor
      m\e\rfloor}\right)$ for all $m$.  
\end{thm}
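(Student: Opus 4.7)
I would argue by contradiction. Suppose no pair $(x,\e)$ satisfies the conclusion; then for every closed point $x\in X$ the asymptotic vanishing order
\[
v_x(S)\;:=\;\lim_{m\to\infty}\frac{1}{m}\ord_x(\fb_m)
\]
equals zero. Here the limit exists by Fekete's lemma applied to the subadditivity $\ord_x(\fb_{m+m'})\le\ord_x(\fb_m)+\ord_x(\fb_{m'})$. The goal is to derive $e(S)=\vol(L)$, contradicting the hypothesis. As a first step I would upgrade this vanishing to all divisorial valuations on $X$ via Izumi's inequality: for any divisorial valuation $v$ with non-empty center $Z_v\subset X$ and any closed point $x\in Z_v$, there is a constant $C>0$ with $v\le C\cdot\ord_x$ on $\cO_{X,x}$. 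Applying this to an element of $\fb_m$ realizing $\ord_x(\fb_m)$ gives $v(\fb_m)\le C\ord_x(\fb_m)$, hence $v(S):=\lim_m v(\fb_m)/m=0$. In particular, after fixing any birational model $Y\to X$ and passing to a common resolution with each $X_m$, the coefficient in $\tfrac{1}{m}F_m$ of each prime divisor of $Y$ tends to zero.

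\textbf{Volume reconstruction.} By Corollary~\ref{cor:mult}, $e(S)=\lim_m(P_m^n)\le\vol(L)$. From $\mu_m^*L=P_m+\tfrac{1}{m}F_m$ I expand
\[
\vol(L)-(P_m^n)\;=\;\frac{1}{m}\,F_m\cdot\sum_{k=0}^{n-1}(\mu_m^*L)^{n-1-k}\cdot P_m^k,
\]
each summand being a nonnegative intersection number (nef $\cdot$ nef $\cdot$ effective). I would split $F_m=F_m^{\mathrm{div}}+F_m^{\mathrm{exc}}$ into its $\mu_m$-non-exceptional and $\mu_m$-exceptional parts. The contribution of $F_m^{\mathrm{div}}$ can be bounded via the projection formula in terms of $L^{n-1}\cdot(\mu_m)_*F_m^{\mathrm{div}}$; since each component of $(\mu_m)_*F_m^{\mathrm{div}}$ has coefficient $o(m)$ by the first paragraph, this contribution is $o(1)$.

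\textbf{Main obstacle.} The critical step is controlling the exceptional contribution. Although each individual exceptional prime divisor on a fixed birational model has asymptotic coefficient zero, the support of $F_m^{\mathrm{exc}}$ moves to increasingly refined models $X_m$, so a uniform estimate in $m$ is required. Moreover, the naive bound $P_m^k\cdot(\mu_m^*L)^{n-1-k}\cdot F_m^{\mathrm{exc}}\le(\mu_m^*L)^{n-1}\cdot F_m^{\mathrm{exc}}$ does not follow from $P_m\le\mu_m^*L$, because the higher self-intersections of $F_m^{\mathrm{exc}}$ are typically negative along exceptional loci. I expect Boucksom resolves this by combining a quantitative Fujita approximation for $L$ on a sufficiently high fixed birational model with a Khovanskii-Teissier or Diskant type inequality, so that the volume deficit $\vol(L)-(P_m^n)$ is controlled by the divisorial deficit of $(\mu_m)_*F_m$ alone. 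With that reduction in hand, the first paragraph closes the argument.
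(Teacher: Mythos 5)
Your overall skeleton matches the paper's: reduce to divisorial valuations, use Izumi's theorem to get back to a closed point (the paper's Lemma~\ref{lem:izumi}), and use the Fujita-type approximation $e(S)=\lim_m(P_m^n)$ of Corollary~\ref{cor:mult}. But the step you flag as the ``main obstacle'' is precisely the mathematical content of the theorem, and your proposal does not supply it. Your volume-reconstruction identity is correct, and you rightly observe that the terms $(\mu_m^*L)^{n-1-k}\cdot P_m^k\cdot F_m^{\mathrm{exc}}$ with $k\geqslant 1$ cannot be controlled by substituting $P_m\leqslant\mu_m^*L$ (the correction term involves two effective divisors against only $n-2$ nef classes, so it has no sign). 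The guessed resolution via a quantitative Fujita approximation combined with a Khovanskii--Teissier or Diskant inequality is not what the paper does, and no argument along those lines is actually carried out; as written, the proof is incomplete at its central step.

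What the paper actually proves is the monotonicity statement of Lemma~\ref{lem:mult}: if $v(S)\geqslant v(S')$ for all divisorial valuations $v$, then $e(S)\leqslant e(S')$; the theorem follows by taking $S'=R$, reading the contrapositive to produce a divisorial $v$ with $v(S)>0$, and invoking Lemma~\ref{lem:izumi}. The mechanism is an iterated replacement of factors in mixed intersection numbers rather than a one-shot estimate of $\vol(L)-(P_m^n)$. Fixing $m_0$ with $e(S)\leqslant(P_{m_0}^n)+\e$ and using $P_{m_0}\leqslant P_{m_1}$ for multiples $m_1$ of $m_0$, one has $e(S)\leqslant(P_{m_1}\cdot P_{m_0}^{n-1})+\e$, and the key point is that
\[
(P_{m_1}\cdot P_{m_0}^{n-1})-(P'_{m_1}\cdot P_{m_0}^{n-1})=\sum_{E\subset X_{m_0}}\left(\frac{\ord_E(F'_{m_1})}{m_1}-\frac{\ord_E(F_{m_1})}{m_1}\right)(E\cdot P_{m_0}^{n-1})
\]
is, by the projection formula, a sum over the \emph{finitely many} prime divisors $E$ of the \emph{fixed} model $X_{m_0}$ contained in $\mathrm{supp}(F_{m_0}+F'_{m_0})$, paired against the fixed nef class $P_{m_0}^{n-1}$. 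This is exactly what neutralizes the moving-support problem you identify: the divisors over which one must pass to the limit never leave a fixed finite set, so the convergence $\ord_E(F_{m_1})/m_1\to\ord_E(S)\geqslant\ord_E(S')$ can be applied uniformly to make this difference at most $\e$. Iterating --- replacing one factor at a time to pass from $(P'_{m_1}\cdot P_{m_0}^{n-1})$ to $(P'_{m_1}\cdot P_{m_2}\cdot P_{m_0}^{n-2})$, and so on --- produces $m_1\mid m_2\mid\cdots\mid m_n$ with $e(S)\leqslant(P'^n_{m_n})+(n+1)\e$, whence $e(S)\leqslant e(S')$ by Corollary~\ref{cor:mult}. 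Some argument of this kind (or a genuine substitute) is required to close your proof.
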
 

Recall that a \emph{divisorial valuation} (aka \emph{discrete
  valuation of rank $1$}) on $X$ is a valuation $v:k(X)^*\to\R$ of the
form $v=c\ord_E$ with $c>0$ and $E$ a prime divisor on a birational
model $X'$ of $X$, which can always be assumed to be normal,
projective and to dominate $X$. In particular, since $X$ is smooth,
every scheme theoretic point $\xi\in X$ defines a divisorial valuation
$\ord_\xi$. If we denote by $V=\overline{\{\xi\}}$ the subvariety of
$X$ having $\xi$ as its generic point, then we have for all
$f\in\cO_{X,x}$ 
\begin{equation}\label{equ:compdiv}
\ord_\xi(f)=\min_{x\in V}\ord_x(f).
\end{equation}
If we still denote by $\fb_m$ the base-ideal of $S_m$, then each
divisorial valuation $v$ on $X$ defines a subadditive sequence 
$$
v(\fb_m):=\min\{v(f)\mid f\in\fb_m\setminus\{0\}\},
$$ 
and we may thus define the \emph{asymptotic vanishing order of $S$
  along $v$} (cf. \cite{ELMNP}) as 
$$
v(S):=\lim_{m\to\infty}\frac{v(\fb_m)}{m}\in[0,+\infty[.
$$
In this language, the conclusion of Theorem \ref{thm:sz} amounts to
the existence of a closed point $x\in X$ such that $\ord_x(S)>0$. We
begin with the following consequence of Izumi's theorem on divisorial
valuations.  

\begin{lem}\label{lem:izumi} If there exists a divisorial valuation
  $v$ on $X$ such that $v(S)>0$, then $\ord_x(S)>0$ for some closed
  point $x\in X$.  
\end{lem}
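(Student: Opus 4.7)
\emph{Plan.} Let $V \subset X$ be the center of $v$ on $X$, with generic point $\xi$. Since $v(S)>0$, we have $v(\fb_m)>0$ for $m\gg 1$, so $\fb_m$ is contained locally in the ideal sheaf of $V$, which forces $V\subsetneq X$. Pick any closed point $x \in V$. The goal is to produce a constant $C>0$, independent of $m$, such that $v(\fb_m)\leqslant C\,\ord_x(\fb_m)$ for all $m$; dividing by $m$ and letting $m\to\infty$ then gives $\ord_x(S)\geqslant v(S)/C>0$, as required.

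The comparison proceeds in two steps. First, I apply Izumi's theorem to the local ring $\cO_{X,\xi}$. Since $X$ is smooth, $\cO_{X,\xi}$ is a regular local ring, and its completion is also a regular local ring, hence a domain; so $\cO_{X,\xi}$ is analytically irreducible. Both $v$ and $\ord_\xi$ are divisorial valuations of $k(X)$ centered at the maximal ideal of $\cO_{X,\xi}$, so Izumi's theorem yields $C>0$ with $v(f)\leqslant C\,\ord_\xi(f)$ for every nonzero $f\in\cO_{X,\xi}$. Second, \eqref{equ:compdiv} gives $\ord_\xi(f)\leqslant \ord_x(f)$ on $\cO_{X,x}$. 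Combining, $v(f)\leqslant C\,\ord_x(f)$ on $\cO_{X,x}$; applying this to a generator $g\in\fb_{m,x}$ that minimizes $\ord_x$ on $\fb_{m,x}$ gives $v(\fb_m)\leqslant v(g)\leqslant C\,\ord_x(g)=C\,\ord_x(\fb_m)$, which is what I wanted.

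The main obstacle is that the usual formulation of Izumi's theorem concerns valuations centered at a closed point, whereas the center $V$ of $v$ on $X$ may be positive-dimensional. The workaround is to apply Izumi at the generic point $\xi$ of $V$---legitimate because $\cO_{X,\xi}$ is an analytically irreducible regular local ring---and then use \eqref{equ:compdiv} to transfer the comparison from $\ord_\xi$ back to $\ord_x$ at an arbitrary closed point $x\in V$. No particular genericity of $x$ is needed, only that $x\in V$, since \eqref{equ:compdiv} already provides the inequality $\ord_\xi\leqslant \ord_x$ for every closed point of $V$.
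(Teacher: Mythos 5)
Your proof is correct and follows essentially the same route as the paper: identify the center $\xi$ of $v$ on $X$, apply Izumi's theorem (in the form valid for two divisorial valuations sharing the possibly non-closed center $\xi$, as in H\"ubl--Swanson) to compare $v$ with $\ord_\xi$, and then use \eqref{equ:compdiv} to pass from $\ord_\xi$ to $\ord_x$ for a closed point $x$ in the center. The only difference is presentational --- you package the two comparisons into a single constant $C$ with $v(\fb_m)\leqslant C\,\ord_x(\fb_m)$ before dividing by $m$, whereas the paper passes to the limit after each step --- which changes nothing of substance.
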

\begin{proof} Let $\xi\in X$ be the center of $v$ on $X$ (concretely,
  there exists a birational morphism $\mu:X'\to X$ with $X'$
  projective and a prime divisor $E\subset X'$ such that $v=c\ord_E$,
  $c>0$, and $\xi$ is then the generic point of $\mu(E)\subset
  X$). Since the divisorial valuations $\ord_\xi$ and $v$ share the
  same center $\xi$ on $X$, the version of Izumi's theorem proved in
  \cite[Theorem 1.2]{HS} implies that there exists $C>0$ such that  
$$
C^{-1}v(f)\le\ord_\xi(f)\le C v(f)
$$
for all $f\in\cO_{X,\xi}$. Applying this to $f\in\fb_m$ yields in the
limit as $m\to\infty$  
$$
\ord_\xi(S)\ge C^{-1}v(S)>0.
$$ 
But for any closed point $x\in\overline{\{\xi\}}$ we also have
$\ord_x\ge\ord_\xi$ on $\cO_{X,x}$ by (\ref{equ:compdiv}), and this
similarly implies $\ord_x(S)\ge\ord_\xi(S)$, hence $\ord_x(S)>0$.  
\end{proof}

As a consequence of Corollary \ref{cor:mult}, we next prove: 
\begin{lem}\label{lem:mult} Let $S,S'$ be two graded subalgebras of
  $R$ containing an ample series. If $v(S)\ge v(S')$ for all
  divisorial valuations $v$, then $e(S)\le e(S')$.  
\end{lem}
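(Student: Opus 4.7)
The strategy is to use Corollary~\ref{cor:mult} to express $e(S)=\lim_m(P_m^n)$ and $e(S')=\lim_m((P'_m)^n)$, and then to compare these intersection numbers using the valuation hypothesis. Informally, $v(S)\ge v(S')$ says that $S$ has ``more vanishing'' than $S'$ along every divisorial valuation, which should force the nef divisors $P'_m$ to dominate $P_m$ asymptotically and hence $(P_m^n)\le((P'_m)^n)+o(1)$.

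Concretely, fix $\e>0$, take $m$ and $m'$ both large and divisible, and pass to a common log resolution $\pi\colon Y\to X$ of the base ideals $\fb_m$ and $\fb'_{m'}$; on $Y$ we then have nef $\Q$-divisors $\pi^*P_m=\pi^*L-G_m/m$ and $\pi^*P'_{m'}=\pi^*L-G'_{m'}/m'$ computing $(P_m^n)$ and $((P'_m)^n)$ respectively. The containment $\fb_a\cdot\fb_b\subset\fb_{a+b}$, inherited from $S_a\cdot S_b\subset S_{a+b}$, makes $v(\fb_k)$ super-additive for every divisorial $v$, so by Fekete's lemma $v(\fb_k)/k\nearrow v(S)$ along divisible subsequences, and likewise for $S'$. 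Combined with $v(S)\ge v(S')$, this lets me arrange, for any prescribed finite list of prime divisors $E$ on $Y$, the bound $\ord_E(G'_{m'})/m'\le\ord_E(G_m)/m+\e$ once $m,m'$ are sufficiently large and divisible. Fixing an ample class $A$ on $X$, this translates to $\pi^*P'_{m'}+\e\,\pi^*A\ge\pi^*P_m$ on $Y$ with both sides nef, and expanding the $n$-th intersection yields
\[
((P'_{m'})^n)\ge(P_m^n)-n\e\,(\pi^*P'_{m'})^{n-1}\cdot\pi^*A-O(\e^2)\ge(P_m^n)-C\e,
\]
where $C$ depends only on $L$ and $A$, using that $(\pi^*P'_{m'})^{n-1}\cdot\pi^*A\le L^{n-1}\cdot A$ by nefness of $\pi^*P'_{m'}\le\pi^*L$. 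Letting $m,m'\to\infty$ and then $\e\to 0$ gives $e(S')\ge e(S)$.

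The main obstacle is the coordination of $m,m'$ with the choice of model: at any finite level the divisors in $G_m$ and $G'_{m'}$ form finite sets that vary with $m,m'$, while the valuation hypothesis ranges over all divisorial valuations on all birational models of $X$. The delicate step is therefore to fix, once $\e$ is chosen, a single birational model $Y$ large enough to realise every prime divisor $E$ with $\max(v(S),v(S'))\ge\e$ (for $v=\ord_E$); the ample-series hypothesis on $S$ and $S'$ is what keeps this list of $E$ finite, so that with $Y$ fixed the approximation above is uniform over a finite set and the intersection-theoretic argument closes.
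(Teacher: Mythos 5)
Your overall strategy --- Fujita approximation via Corollary~\ref{cor:mult} plus a comparison of the nef parts $P_m$, $P'_m$ driven by the hypothesis on asymptotic vanishing orders --- is the right starting point, and it is also the paper's. But the step you yourself flag as delicate is where the argument genuinely breaks. The set of prime divisors $E$ (over all birational models of $X$) with $\max(\ord_E(S),\ord_E(S'))\ge\e$ is in general \emph{infinite}: if $E$ lies over the stable base locus of $S$ and $E'$ is the exceptional divisor of a further blow-up of a point of $E$, then $\ord_{E'}(S)\ge\ord_E(S)$, so one produces infinitely many pairwise distinct divisorial valuations with asymptotic order bounded below; the ample-series hypothesis does nothing to prevent this. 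Hence there is no single model $Y$ realising all such $E$, and without it the divisor-level inequality $\pi^*P'_{m'}+\e\,\pi^*A\ge\pi^*P_m$ cannot be arranged: the divisors supporting $F'_{m'}$ change with $m'$, and for an $E$ outside any prescribed finite list you only know $\ord_E(\fb'_{m'})/m'\ge\ord_E(S')$ (the sequence $v(\fb_k)$ is \emph{sub}additive, so $v(\fb_k)/k$ decreases to its limit along divisible subsequences --- you have the monotonicity backwards, though that particular slip is harmless), with no upper bound uniform in $E$. A secondary problem is that even on a finite list, $\e\,\pi^*A$ minus $\e$ times a sum of $\pi$-exceptional divisors need not be pseudoeffective, so the passage from coefficient bounds to $\pi^*P'_{m'}+\e\,\pi^*A\ge\pi^*P_m$ is not justified as written.

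The paper's proof avoids any divisor-level comparison. Fixing $m_0$ with $e(S)\le(P_{m_0}^n)+\e$, it compares $(P_{m_1}\cdot P_{m_0}^{n-1})$ with $(P'_{m_1}\cdot P_{m_0}^{n-1})$ for $m_1$ a large multiple of $m_0$: by the projection formula the difference is a sum over prime divisors $E$ of the \emph{fixed} model $X_{m_0}$, and only those contained in the support of $F_{m_0}+F'_{m_0}$ --- a finite set independent of $m_1$ --- can contribute. On that finite set the convergence of $\ord_E(F_{m_1})/m_1$ to $\ord_E(S)\ge\ord_E(S')$, and of $\ord_E(F'_{m_1})/m_1$ to $\ord_E(S')$, gives the needed bound. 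One then iterates, replacing one factor at a time along a divisibility chain $m_0\mid m_1\mid\cdots\mid m_n$ and using nefness together with $P_{m_i}\le P_{m_j}$ for $i\le j$, arriving at $e(S)\le(P'^n_{m_n})+(n+1)\e$. This ``one factor at a time against a fixed $P_{m_0}^{n-1}$'' device is the idea missing from your proposal.
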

\begin{proof} Let $\fb_m,\fb'_m\subset\cO_X$ be the base-ideals of
  $S_m$ and $S_m'$ respectively, and let $P_m$ and $P'_m$ be the nef
  $\Q$-Cartier divisors they determine on some high enough model $X_m$
  over $X$, as in (\ref{equ:pm}).  

Given $\e>0$, Corollary \ref{cor:mult} allows to find $m_0\in\N$ such that 
$e(S)\le(P_{m_0}^n)+\e$, and hence 
\begin{equation}\label{equ:m0}
e(S)\le(P_{m_1}\cdot P_{m_0}^{n-1})+\e
\end{equation}
for any multiple $m_1$ of $m_0$, since $P_{m_0}$ is nef and $P_{m_0}\le P_{m_1}$. 
By the projection formula and the definition of $P_{m_1}$ and $P'_{m_1}$, we have
$$
(P_{m_1}\cdot P_{m_0}^{n-1})-(P'_{m_1}\cdot
P_{m_0}^{n-1})=\sum_{E\subset
  X_{m_0}}\left(\frac{\ord_E(F'_{m_1})}{m_1}-\frac{\ord_E(F_{m_1})}{m_1}\right)(E\cdot
P_{m_0}^{n-1}), 
$$
where the sum runs over prime divisors $E$ of $X_{m_0}$ and any $E$
actually contributing to the sum is contained in the support of
$F_{m_0}+F'_{m_0}$, hence belongs to a finite set of prime divisors of
$X_{m_0}$ independent of $m_1$. Since we have by assumption 
$$
\lim_{m_1\to\infty}\frac{\ord_E(F_{m_1})}{m_1}=\ord_E(S)\ge\ord_E(S')=\lim_{m_1\to\infty}\frac{\ord_E(F'_{m_1})}{m_1} 
$$
for any such $E$, we may thus choose $m_1$ a large enough multiple of
$m_0$ to guarantee that  
$$
(P_{m_1}\cdot P_{m_0}^{n-1})\le(P'_{m_1}\cdot P_{m_0}^{n-1})+\e,
$$ 
and hence
\begin{equation}\label{equ:m1}
e(S)\le(P'_{m_1}\cdot P_{m_2}\cdot P_{m_0}^{n-2})+2\e
\end{equation}
for any multiple $m_2$ of $m_1$, by (\ref{equ:m0}) and the fact that
$P_{m_0}, P'_{m_1}, P_{m_2}$ are nef with $P_{m_0}\le P_{m_2}$. We
similarly have 
$$ \begin{aligned}
(P'_{m_1}\cdot P_{m_2}\cdot &P_{m_0}^{n-2})-(P'_{m_1}\cdot
P'_{m_2}\cdot P_{m_0}^{n-2})=\\ &\quad =\sum_{E\subset
  X_{m_1}}\left(\frac{\ord_E(F'_{m_2})}{m_2}-\frac{\ord_E(F_{m_2})}{m_2}\right)(P'_{m_1}\cdot
E\cdot P_{m_0}^{n-1})\le\e
\end{aligned} 
$$
for $m_2$ large enough, hence 
$$
e(S)\le(P'_{m_1}\cdot P'_{m_2}\cdot P_{m_3}\cdot P_{m_0}^{n-3})+3\e
$$
for any multiple $m_3$ of $m_2$, using (\ref{equ:m1}) and $P_{m_0}\le
P_{m_3}$. Continuing in this way, we finally obtain positive integers
$m_1,...,m_n$ with $m_i$ dividing $m_{i+1}$ and such that 
$$
e(S)\le(P'_{m_1}\cdot...\cdot P'_{m_n})+(n+1)\e, 
$$
hence
$$
e(S)\le(P'^n_{m_n})+(n+1)\e
$$
since $P_{m_i}\le P_{m_n}$. But $m_n$ can be taken to be as large as
desired, thus $(P'^n_{m_n})$ is as close to $e(S')$ as we like by
Corollary \ref{cor:mult}, and we conclude as desired that $e(S)\le
e(S')$. 
\end{proof}

\begin{proof}[Proof of Theorem \ref{thm:sz}] By Lemma \ref{lem:mult},
  the assumption $e(S)<e(R)$ implies that $v(S)>v(R)\ge 0$ for some
  divisorial valuation $v$. We conclude using Lemma \ref{lem:izumi}. 
\end{proof}

\bigskip
\noindent Department of Mathematics, University of Notre Dame, Notre
Dame, IN \\ {\tt gszekely@nd.edu}

\bigskip
\noindent CNRS-Universit{\'e} Pierre et Marie Curie,  I.M.J., F-75251 Paris Cedex 05,
 France \\ {\tt boucksom@math.jussieu.fr}

\end{document}